\DeclareMathOperator{\supp}{supp}
\newcommand{\st}{\: : \:}
\newcommand{\vast}{\bBigg@{4}}
\newcommand{\Vast}{\bBigg@{5}}
\DeclareFontFamily{U}{mathx}{\hyphenchar\font45}
\DeclareFontShape{U}{mathx}{m}{n}{
<5> <6> <7> <8> <9> <10>
<10.95> <12> <14.4> <17.28> <20.74> <24.88>
mathx10
}{}
\DeclareSymbolFont{mathx}{U}{mathx}{m}{n}
\DeclareMathAccent{\widecheck}{0}{mathx}{"71}
\def\citep#1#2{\cite[{#1}]{#2}}
\newcommand{\bbE}{{\mathbb{E}}}
\newcommand{\bbT}{{\mathbb{T}}}
\newcommand{\N}{{\mathbb{N}}} 
\newcommand{\R}{{\mathbb{R}}} 
\newcommand{\Z}{{\mathbb{Z}}} 
\newcommand{\CC}{{\mathbb{C}}} 
\DeclareSymbolFont{bbold}{U}{bbold}{m}{n}
\DeclareSymbolFontAlphabet{\mathbbold}{bbold}
\newcommand{\bbone}{{\mathbbold{1}}}
\newcommand{\bsf}{{\boldsymbol{f}}}
\newcommand{\bsh}{{\boldsymbol{h}}}
\newcommand{\bst}{{\boldsymbol{t}}}
\newcommand{\bsx}{{\boldsymbol{x}}}
\newcommand{\bsy}{{\boldsymbol{y}}}
\newcommand{\bsz}{{\boldsymbol{z}}}
\newcommand{\bszero}{{\boldsymbol{0}}} 
\newcommand{\bsgamma}{{\boldsymbol{\gamma}}}
\newcommand{\bszeta}{{\boldsymbol{\zeta}}}
\newcommand{\bssigma}{{\boldsymbol{\sigma}}}
\newcommand{\calH}{{\mathcal{H}}}
\newcommand{\calI}{{\mathcal{I}}}
\newcommand{\calP}{{\mathcal{P}}}
\newcommand{\fraku}{{\mathfrak{u}}}
\newcommand{\rme}{{\mathrm{e}}}
\newcommand{\rmi}{{\mathrm{i}}}
\newcommand{\floor}[1]{\left\lfloor #1 \right\rfloor} 
\newcommand{\rd}{\,\mathrm{d}} 
\DeclareMathOperator{\var}{Var}
  \providecommand*{\toclevel@author}{999}
  \providecommand*{\toclevel@title}{0}
\theoremstyle{plain}
  \newtheorem{theorem}{Theorem}
  \newtheorem{lemma}{Lemma}
  \newtheorem{corollary}{Corollary}
\theoremstyle{definition}
\theoremstyle{remark}
  \newtheorem{remark}{Remark}
\newcommand{\doublewidetilde}[1]{{%
  \mathpalette\double@widetilde{#1}%
}}
\newcommand{\double@widetilde}[2]{%
  \sbox\z@{$\m@th#1\widetilde{#2}$}%
  \ht\z@=.9\ht\z@
  \widetilde{\box\z@}%
}
\newcommand{\ralpha}{r_{\alpha, \bsgamma}}
\newcommand{\eworKor}[1]{e_{#1,\alpha,\bsgamma}^{\mathrm{wor}}}
\newcommand{\ewor}{e_{\Hsigma}^{\mathrm{wor}}}
\newcommand{\KorSpace}{{\calH_{d,\alpha,\bsgamma}^\textup{Kor}}}
\newcommand{\Hsigma}{{\calH_\bssigma}}
\newcommand{\phii}{{\phi_i}}
\newcommand{\normKor}[1]{{\left\|#1\right\|_{d,\alpha,\bsgamma}}}
\newcommand{\muquant}{\mu}
\newcommand{\Pn}{{P_n}}
\newcommand{\Pnzeta}{{P_n^\bszeta}}
\newcommand{\PnNz}{{P_n^{\bsz/N}}}
\newcommand{\LS}{{\, \textup{LS}}}
\newcommand{\lsq}{{\, \textup{LS}_{m}^{\Pn}}}
\newcommand{\lsqzeta}{{\, \textup{LS}_{m}^{\Pnzeta}}}
\newcommand{\lsqNz}{{\, \textup{LS}_{m}^{\PnNz}}}
\newcommand{\Phinm}{\Phi_{\Pn, m}}
\newcommand{\Phinzetam}{\Phi_{\Pnzeta, m}}
\newcommand{\Phinzm}{\Phi_{\PnNz, m}}
\newcommand{\Phininf}{\Phi_{\Pn, \infty}}
\newcommand{\Phinzetainf}{\Phi_{\Pnzeta, \infty}}
\newcommand{\PhinNzinf}{\Phi_{\PnNz\!\!, \infty}}
\newcommand{\PhinNzm}{\Phi_{\PnNz\!, m}}
\newcommand{\Phim}{\Phi_{m}}
\newcommand{\Rminf}{R_{m, \infty}}
\newcommand{\diag}{{\textup{diag}}}
\newcommand{\Iinf}{{I_{\infty \times \infty}}}
\newcommand{\Imm}{{I_{m \times m}}}
\newcommand{\Dlambda}{D_\bssigma}
\newcommand{\Dlambdam}{D_{\bssigma}'}
\newcommand{\Npos}{\N}
\newcommand{\Cinfty}{C_1}
\newcommand{\Chyp}{C_d}
\definecolor{darkred}{RGB}{139,0,0}
\definecolor{darkgreen}{RGB}{0,100,0}
\definecolor{darkmagenta}{RGB}{170,0,120}
\definecolor{darkpurple}{RGB}{110,0,180}
\definecolor{darkblue}{RGB}{40,0,200}
\definecolor{darkbrown}{rgb}{0.75,0.40,0.15}
\begin{document}

\title{Error bounds for function approximation using generated sets}

\author{Ronald Cools, Dirk Nuyens and Laurence Wilkes}

\date{\today}

\maketitle

\begin{abstract}
    This paper explores the use of ``generated sets'' $\{ \{ k \bszeta \} \st k = 1, \ldots, n \}$ for function approximation in reproducing kernel Hilbert spaces which consist of multi-dimensional functions with an absolutely convergent Fourier series.
    The algorithm is a least squares algorithm that samples the function at the points of a generated set.
    We show that there exist $\bszeta \in [0,1]^d$ for which the worst-case $L_2$ error has the optimal order of convergence if the space has polynomially converging approximation numbers.
    In fact, this holds for a significant portion of the generators.
    Additionally we show that a restriction to rational generators is possible with a slight increase of the bound.
    Furthermore, we specialise the results to the weighted Korobov space, where we derive a bound applicable to low values of sample points, and state tractability results.
\end{abstract}

\section{Introduction}\label{sec:introduction}

In this paper we analyse the use of generated sets in approximating $d$-dimensional functions with an absolutely convergent Fourier series.
For $n \in \Npos := \{ 1, 2, \ldots \}$ sample points, and a \emph{generator} $\bszeta \in [0, 1]^d$, the \emph{generated set} is the set of points
\begin{align}\label{eq:generated-set}
    \Pnzeta := \{ \{ k \bszeta \} \st k = 1, \ldots, n \}
    ,
\end{align}
where~$\{ \,\cdot\, \}$ represents the elementwise fractional part, equivalent to the elementwise modulo one operation.
These sets first appear in~\cite{K2013, K2014}, where they are defined to include a point at the origin.

Generated sets resemble \emph{Kronecker sequences}, which are infinite sequences of similarly defined points, but where $\bszeta$ is chosen to have exclusively irrational components.
Kronecker demonstrated that sequences of this form are dense in the $d$-dimensional unit cube $[0,1]^d$ and later Weyl~\cite{W1916} showed that the points in the sequence are uniformly distributed.
The discrepancy properties of these sequences have also been analysed and applied to the problem of numerical integration, see~\cite{KN1974, O1982, L1988, CVW2007}.
The points used in this paper deviate from Kronecker sequences as we will instead demonstrate that there is a high probability that a vector $\bszeta$ which has been uniformly randomly sampled from the unit cube, i.e., not necessarily having irrational components, will be a good candidate for the generator of a generated set.
Of course, this does not necessarily imply that its components would not all be irrational as, due to Cantor, irrational numbers are uncountable and dense in the real numbers.
However, we will show that the set of generators which are good must contain vectors with all rational components.
In the case of $\bszeta$ having all rational components, the point set resembles a \emph{rank-$1$ lattice}, see, e.g., \cite{KKKNS2022,CKNS2020,KMN2024,KPV2015}, however, if the common denominator would be $N$, we only take $n$ points, $n \le N$, and exclude the point at the origin for $k = 0$.

The function space we work with is a separable reproducing kernel Hilbert space, $\Hsigma$, defined by a non-negative square-summable sequence $\bssigma = (\sigma_\bsh)_{\bsh \in \Z^d}$ which dictates the relative importance of the Fourier coefficients $\hat{f}_\bsh$ at the indices $\bsh \in \Z^d$.
This space is
\[
    \Hsigma := \bigg\{ f \in L_2([0, 1]^d) \st \| f \|_\Hsigma^2 := \sum_{\bsh \in \Z^d} |\hat{f}_\bsh|^2 \, \sigma_\bsh^{-2} < \infty \bigg\}
    .
\]

The least squares algorithm $\lsq$ we employ approximates $m$ Fourier coefficients of a function~$f \in \Hsigma$ by using function values sampled at~$n \ge m$ points in the set $\Pn \subset [0, 1]^d$, where the $m$ Fourier coefficients correspond to the largest values of $\sigma_\bsh$ for $\bsh \in \Z^d$.
In our case, the point set we use will be a generated set $\Pnzeta$.
We will later equate~$\bssigma$ with the ordered sequence $(\sigma_i)_{i=1}^\infty$ which is the same sequence with some order such that it is non-increasing.

Such algorithms have appeared frequently in recent years, and \cite{KU2021a} demonstrates that it can produce a similar rate of convergence in separable reproducing kernel Hilbert spaces with square-summable linear widths to arbitrary linear algorithms for a significant proportion of the possible point sets which are formed by independently randomly sampling each point on the cube.
Further developments have been made to this asymptotic bound: in~\cite{KU2021b}, this is extended to the case of Banach spaces and in~\cite{KUV2021} the bound was improved by a logarithmic factor.
In~\cite{DKU2023}, it was demonstrated that there exist point sets for which separable reproducing kernel Hilbert spaces with finite trace attain a bound which is not only optimal in terms of the order but is also asymptotically sharp.
Here, we use order to refer to the polynomial order as defined in \cite{KU2021a}.
When applied to the function space $\Hsigma$, \cite[Equation~$(10)$]{DKU2023} implies that, for any $m \in \N$, there exists a point set $\Pn$ of size $n$ which, when used with the least squares algorithm, produces the following bound
\[
    \ewor(\lsq)
    \lesssim
    \sigma_{m+1} + \sqrt{\frac{1}{m} \sum_{i > m} \sigma_i^2}
    ,
\]
where $n \le c m$ for some universal constant $c \ge 1$ and $\ewor$ is the worst-case $L_2$ error for functions $f \in \Hsigma$ with $\| f \|_\Hsigma \le 1$.
For further notation, we will use $\lesssim$ to mean that the left hand side is eventually smaller than a constant times the right hand side, and any relevant dependencies of the constant will be explicitly mentioned.
There have also been results which utilise this technique for point sets which are structured.
In~\cite{BKPU2024}, the authors use a two-step subsampling procedure on a rank-$1$ lattice point set in order to produce a smaller point set which retains a good error decay.

The bound we prove in this paper does not achieve exactly the same bound as above but does achieve the same order of convergence in spaces where the sequence $\bssigma$ converges at least polynomially in $i \in \Npos$.
More precisely, in Theorem~\ref{thm:upperbound} we prove, for the space $\Hsigma$ under a mild additional regularity condition depending on some $r \in \Npos$, and for $m$ allowed to scale asymptotically with $n$ (depending on $\epsilon$), that the least squares error satisfies
\[
    \ewor(\lsqzeta)
    \le
    \sigma_{m+1} + \sqrt{\frac{1}{m}\sum_{i > m} \sigma_i^2 + \sqrt{\frac{1}{m^{1+r\epsilon}} \sum_{i > m} i^{r\epsilon} \, \sigma_i^4}}
    \qquad
    \text{for all }
    0 < \epsilon \le 1
    ,
\]
for a large proportion of generators $\bszeta \in [0,1]^d$.
Theorem~\ref{thm:upperbound} also contains a more general bound which does not depend on this regularity condition.

We note that instead of using the least squares algorithm, one can also use the kernel method, see, e.g., \cite{TW1980, KKKNS2022}, which is known to be the optimal algorithm, given the point set, and as such the worst-case $L_2$ error of using the generated set in combination with the kernel method is upper bounded by the above bound.
In Corollary~\ref{cor:sobolevzetabound}, we show that, for spaces with polynomial convergence of the sequence $\bssigma$ and a weak condition on the regularity of $\bssigma$ dependent on some $r \in \Npos$, the bound provided by Theorem~\ref{thm:upperbound} will become
\[
    \ewor(\lsqzeta)
    \lesssim
    n^{-\frac{1-\epsilon}{1+r\epsilon} \alpha} \log^{\alpha(d-1)}(n)
    \qquad
    \text{for all }
    0 < \epsilon \le 1
    ,
\]
where the implied constant tends towards infinity as $\epsilon$ approaches~$0$.
This bound has order $\alpha$, with $\epsilon$ taken arbitrarily close to~$0$.
This is known to be the optimal order, see, e.g., \cite{DTU2018, NW2008, NW2010, NW2012}.

Generated sets are described in~\cite{K2013}, where they are defined to include a point at the origin.
The paper~\cite{K2013} details how these sets can be used for function approximation and reconstruction with the same algorithm used here but for trigonometric polynomials, meaning that the functions have finite Fourier series expansions on a fixed index set.
In~\cite{K2013}, the ``rank-$1$ structure'', i.e., meaning ``multiples of one vector'' as in~\eqref{eq:generated-set}, is utilised to evaluate trigonometric polynomials at the points of a generated set using non-equispaced fast Fourier transforms.
This is expanded upon in~\cite{K2014}, which provides a more detailed description of the reconstruction algorithm's implementation.
It is shown that using an iterative method to solve the least squares problem, given by~\eqref{eq:LSmin} in Section~\ref{sec:leastsquaresalg}, leads to fast convergence when the condition number of the Fourier matrix is low and provides a bound on the condition number of the Fourier matrix which we refer to as~$\Phinm$ in Section~\ref{sec:leastsquaresalg}.
The condition number of $\Phinm$ therefore links not only to the stability of the problem but also to the computational complexity.
The PhD thesis~\cite{K2014}, also provides a method to search for reconstructing generated sets with low condition number of the Fourier matrix and demonstrates an upper bound for the worst-case $L_2$ error in the function space
\[
    \Big\{ f \in L_1([0, 1]^d) \st \sum_{\bsh \in \Z^d} \sigma_\bsh \, |\hat{f}_\bsh| < \infty \Big\}
    ,
\]
where $\sigma_\bsh \ge 1$ for all $\bsh \in \Z^d$.
This space is a subspace of the Wiener algebra, as the norm's definition implies absolute convergence of the Fourier series, and is also a subspace of $\Hsigma$ when the sequence $\bssigma$ is square summable.
For results in similar but more specific spaces related to the Korobov space, see~\cite{KPV2015}.

The layout of the paper is as follows.
In the second section, we will provide the definitions and preliminaries required.
In the third section, we will provide a proof of the theoretical bound on the error of the algorithm with the generated sets and include a demonstration that the bound produces the optimal order of convergence in spaces with polynomial convergence of $\sigma_\bsh$ such as the Korobov space, as defined in Section~\ref{sec:korobovspace}.
In Section~\ref{sec:rational}, we will show that rational generators can also provide the correct bound and, furthermore, a high proportion of them are good when they are selected uniformly from a finite set of possible generators.
Finally, we will look at the Korobov space in more detail and provide parallel results to Theorems~\ref{thm:upperbound}~and~\ref{thm:rationalbound}, which deal with low values of $n$ in contrast to the asymptotic results of Corollaries~\ref{cor:sobolevzetabound} and~\ref{cor:sobolevNzbound}, and establish strong tractability under certain conditions on the weights of the function space.

\section{Background and preliminary results}\label{sec:background}

\subsection{Function space}
For an arbitrary function $f \in L_2([0,1]^d; \CC) = L_2([0, 1]^d)$, we define
\[
    \hat{f}_\bsh := \int_{[0, 1]^d} f(\bsx) \, \rme^{- 2 \pi \rmi \, \bsh \cdot \bsx}  \rd \bsx
    .
\]
Recall that our space and associated norm are defined as
\[
    \Hsigma := \bigg\{ f \in L_2([0, 1]^d) \st \| f \|_\Hsigma^2 := \sum_{\bsh \in \Z^d} |\hat{f}_\bsh|^2 \, \sigma_\bsh^{-2} < \infty \bigg\}
    ,
\]
where $\bssigma \in \ell_2(\Z^d; \R_{> 0})$ is a square summable sequence.

Using Cauchy--Schwarz, we can see that the functions in this space must have an absolutely convergent Fourier series.
If we define a bijection $\bsh_i: \Npos \to \Z^d$, which satisfies $\sigma_{\bsh_i} \ge \sigma_{\bsh_j}$ for $i \le j$, then we can simplify notation so that $\hat{f}_i = \hat{f}_{\bsh_i}$ and $\sigma_i = \sigma_{\bsh_i}$.
We also define
\[
\phii(\bsx)
:=
\rme^{2 \pi \rmi \, \bsh_i \cdot \bsx}
.
\]
This space is a reproducing kernel Hilbert space with finite trace, and the kernel is
\[
    K(\bsx, \bsy) = \sum_{i = 1}^{\infty} \sigma_i^2 \, \phii(\{ \bsx - \bsy \})
    .
\]
Functions in $\Hsigma$ have an absolutely convergent Fourier series and any function with an absolutely convergent Fourier series lies in a space of this form for some $\bssigma \in \ell_2(\Z^d; \R_{> 0})$.

\begin{remark}
    We consider the $d$-dimensional torus, $\bbT^d$, as defined by $[0, 1]^d$ with opposite faces identified.
    All functions in the space $\Hsigma$ have values equal on opposite faces and so are well-defined and continuous on the torus.
    We can just as well say that $\Hsigma \subset C(\bbT^d)$ and so can consider the domain of the functions to be $\bbT^d \simeq [0,1]^d$.
\end{remark}

\subsection{The Korobov space} \label{sec:korobovspace}

A specific example of such a space to which our results apply is called the (\emph{weighted}) \emph{Korobov space}, also called the \emph{unanchored Sobolev space with dominating mixed smoothness on the torus}.
The particular definitions of this space vary in the literature, but we define it as follows.
For positive weights~$\bsgamma = \{ \gamma_\fraku \in (0, 1] \st \fraku \subseteq \{ 1, \ldots, d \} \}$ and smoothness parameter $\alpha > 1/2$ we take the sequence
\begin{align}\label{eq:sigma-Korobov-space}
    \sigma_\bsh^{-1} = \ralpha(\bsh) := \gamma_{\supp(\bsh)}^{-1} \prod_{j \in \supp(\bsh)} |h_j|^\alpha
    ,
\end{align}
where we denote the support of a vector by $\supp(\bsh) := \{ j \in \{ 1, \ldots, d \} : h_j \ne 0 \}$.
Here, $h_j$ represents the $j^\text{th}$ component of the vector $\bsh$.
This should not be confused with $\bsh_j$, with $\bsh$ in bold, which refers to the $j^\text{th}$ element of an ordering on $\Z^d$.
We denote the norm of the Korobov space by $\normKor{\cdot}$.
This is the space of functions from the $d$ dimensional torus $\bbT^d \simeq [0,1]^d$ to $\CC$ which have square-integrable mixed derivatives of order $\alpha$ in each direction.
The weights $\bsgamma$ dictate the relative importance of different subsets of the dimensions and when the weights are all equal to $1$ the space is the unweighted Korobov space.
Here, we have taken the weights to have components less than or equal to $1$ for simplicity; all results can be extended to apply without this restriction by introducing an additional factor depending on $\max_{\fraku \subseteq \{ 1, \ldots, d \}}(\gamma_\fraku)$.

Two features of this function space that enable the application of the forthcoming Corollaries~\ref{cor:sobolevzetabound} and~\ref{cor:sobolevNzbound} are that, in the Korobov space, $\| \bsh_i \|_{\infty} \le i$ for all $i \in \Npos$, and Lemma~\ref{lem:korsigmabound} which bounds the decay of the sequence $\bssigma$ in terms of $i \in \Npos$.
This will use the \emph{weighted hyperbolic cross}, given as
\begin{align} \label{eq:hyperboliccross}
    A_{d,\alpha,\bsgamma}(M)
    :=
    \{ \bsh \in \Z^d \st \ralpha(\bsh) \le M \}
    =
    \{ \bsh_1, \ldots, \bsh_{|A_{d, \alpha, \bsgamma}(M)|} \}
    .
\end{align}

The proof of Lemma~\ref{lem:korsigmabound} requires the following lemma, which appears in other sources (e.g., \cite[Lemma~2.5]{KPV2015}), but we include its proof for completeness.
\begin{lemma}\label{lem:hypcrossbound}
    The unweighted hyperbolic cross, with $M \in \R_{\ge0}$,
    \[
	A_d(M) := \{ \bsh \in \Z^d \st \prod_{\substack{j = 1 \\ h_j \ne 0}}^d |h_j| \le M\}
	,
    \]
    has cardinality $|A_d(M)| = 0$ for $0 \le M < 1$, $|A_d(M)| = 3^d$ for $1 \le M < 2$, and
    \[
	|A_d(M)| \lesssim M \log^{d-1}(M)
	\qquad \text{for } M \ge 2
	,
    \]
    where the implied constant depends on $d$.
\end{lemma}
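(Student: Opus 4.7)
The plan is to handle the three ranges of $M$ separately, with the main work in the asymptotic regime $M \ge 2$ done by induction on the dimension~$d$. The small cases are immediate: for $0 \le M < 1$, any product of positive integers (including the empty product, which equals~$1$) is at least~$1$, so no $\bsh \in \Z^d$ satisfies the defining constraint. For $1 \le M < 2$, the product $\prod_{h_j \ne 0}|h_j|$ is a positive integer bounded above by $M < 2$, hence must equal~$1$, and this happens precisely when $h_j \in \{-1,0,1\}$ for every~$j$, yielding $3^d$ vectors.

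For $M \ge 2$, I would induct on~$d$. The base case $d=1$ gives $A_1(M) = \{h \in \Z : |h| \le M\}$ (the constraint on $h=0$ is vacuous), hence $|A_1(M)| = 2\floor{M}+1 \lesssim M$, which matches the claim since $\log^{d-1}(M) = 1$ when $d=1$. For the inductive step I would partition according to the value of~$h_d$: when $h_d = 0$ the remaining coordinates range over $A_{d-1}(M)$, and when $|h_d| = k \in \{1, \ldots, \floor{M}\}$ they range over $A_{d-1}(M/k)$, with two choices of sign for~$h_d$. This yields the recurrence
\[
|A_d(M)| = |A_{d-1}(M)| + 2 \sum_{k=1}^{\floor{M}} |A_{d-1}(M/k)|.
\]
I would then split the sum at $k = \floor{M/2}$. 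For $k \le M/2$ we have $M/k \ge 2$ and the inductive hypothesis applies; summing $C_{d-1}(M/k)\log^{d-2}(M/k)$ and using the harmonic bound $\sum_{k=1}^{N} 1/k \le 1 + \log N$ produces a contribution $\lesssim M \log^{d-1}(M)$. For $M/2 < k \le \floor{M}$ we have $1 \le M/k < 2$, so the explicit count $|A_{d-1}(M/k)| \le 3^{d-1}$ from the previous range applies; this part contributes only $O(M)$, which is absorbed into $O(M \log^{d-1}(M))$ since $\log(M) \ge \log(2) > 0$.

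The only subtle point is the interface between the asymptotic and small-$M$ regimes when applying the inductive hypothesis to $|A_{d-1}(M/k)|$: this is precisely why the lemma packages the explicit constant $3^{d-1}$ for $M/k \in [1,2)$ together with the asymptotic estimate — the former plugs directly into the tail of the sum where the inductive bound is no longer available. Beyond this, everything reduces to the standard harmonic estimate and a careful bookkeeping of the dependence of the implied constant on~$d$.
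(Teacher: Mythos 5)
Your proof is correct and follows essentially the same route as the paper's: the same explicit counts for $M<2$, the same induction on $d$ via decomposition over the last coordinate, and the same split of the resulting sum at $\floor{M/2}$ with the $3^{d-1}$ count handling the tail where the inductive hypothesis is unavailable. The only (immaterial) difference is that you bound the main part of the sum with the harmonic estimate $\sum_{k\le N}1/k\le 1+\log N$ after pulling out $\log^{d-2}(M/k)\le\log^{d-2}(M)$, whereas the paper compares the sum to the integral $\int_1^{M/2}(M/x)\log^{d-1}(M/x)\,\rd x$; both yield the stated $O(M\log^{d-1}(M))$ bound.
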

\begin{proof}
    The cases for $M < 2$ are clear.
    We will prove the case for $M \ge 2$ by induction on~$d$.
    Our induction hypothesis is
    \[
	|A_d(M)| \le \Chyp \, M \log^{d-1}(M)
	\quad
	\text{for all}
	\quad
	M \ge 2
	.
    \]
    Clearly, for $d = 1$ we have $|A_1(M)| = 1 + 2 \floor{M} \le 3 M$, which satisfies the bound.
    For $A_{d+1}(M)$, we have
    \begin{align*}
	A_{d+1}(M)
	=
	&\bigg\{ \bsh \in \Z^{d+1} \st h_{d+1} = 0 \,\text{ and } \prod_{\substack{j = 1 \\ h_j \ne 0}}^d |h_j| \le M \bigg\}
	\\
	&\bigcup\;
	\bigg\{ \bsh \in \Z^{d+1} \st h_{d+1} \ne 0 \,\text{ and } \prod_{\substack{j = 1 \\ h_j \ne 0}}^d |h_j| \le \frac{M}{|h_{d+1}|} \bigg\}
	,
    \end{align*}
    and so, making use of $|A_d(M/h_{d+1})| = 3^d$ for $1 \le M/h_{d+1} < 2$ and the fact that $(M/x) \log^{d-1}(M/x)$ is decreasing on $1 \le x \le M$, we have
    \begin{align*}
	|A_{d+1}(M)|
	&=
	|A_d(M)| + 2\sum_{h_{d+1} = 1}^{\floor{M}} \left| A_d(M/h_{d+1}) \right|
	\\
	&=
	3 \, |A_d(M)| + 2\sum_{h_{d+1} = 2}^{\floor{M / 2}} \left| A_d(M/h_{d+1}) \right| + 3^d \, (\floor{M} - \floor{M/2})
	\\
	&\le
	3 \, \Chyp \, M \log^{d-1}(M) + 2 \, \Chyp \int_1^{M/2} \frac{M}{x} \log^{d-1}(M / x) \rd x + 3^d \, \frac{M+1}{2}
	\\
	&=
	3 \, \Chyp \, M \log^{d-1}(M) + \frac{2 \, \Chyp}{d} \left( M \log^d(M) - M \log^d(2) \right) + 3^d \, \frac{M+1}{2}
	.
    \end{align*}
    This proves the bound.
\end{proof}

We now prove that the sequence $\bssigma$ decays polynomially in the Korobov space.
\begin{lemma}\label{lem:korsigmabound}
    In the Korobov space, i.e., with definition~\eqref{eq:sigma-Korobov-space}, we have
    \[
	\sigma_i \lesssim i^{-\alpha} \log^{\alpha(d-1)}(i)
	,
    \]
    with implied constant depending on $d$, $\alpha$ and $\bsgamma$.
\end{lemma}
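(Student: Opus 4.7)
The plan is to bound $\sigma_i$ by relating the sorted sequence $(\sigma_i)_{i=1}^\infty$ to the cardinality of the weighted hyperbolic cross $A_{d,\alpha,\bsgamma}(M)$ defined in~\eqref{eq:hyperboliccross}, which by construction equals $|\{\bsh \in \Z^d \st \sigma_\bsh \ge 1/M\}|$. Since the sequence is arranged in non-increasing order, the set $\{\bsh \in \Z^d \st \sigma_\bsh \ge \sigma_i\}$ contains at least the $i$ indices $\bsh_1, \ldots, \bsh_i$. Consequently, any $M > 0$ satisfying $|A_{d,\alpha,\bsgamma}(M)| < i$ forces $\sigma_i < 1/M$, so it suffices to exhibit such an $M$ of order $i^\alpha \log^{-\alpha(d-1)}(i)$.

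To this end, I would first reduce the weighted cross to the unweighted one handled by \RefLem{lem:hypcrossbound}. Since every $\gamma_\fraku \le 1$, the inequality $\ralpha(\bsh) \le M$ implies $\prod_{j \in \supp(\bsh)} |h_j|^\alpha \le M \gamma_{\supp(\bsh)} \le M$, giving the inclusion $A_{d,\alpha,\bsgamma}(M) \subseteq A_d(M^{1/\alpha})$. Applying \RefLem{lem:hypcrossbound} then yields
\[
    |A_{d,\alpha,\bsgamma}(M)| \;\lesssim\; M^{1/\alpha} \log^{d-1}(M)
    \qquad \text{for all } M \ge 2^\alpha,
\]
with implied constant depending on $d$ and $\alpha$.

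The last step is to invert this estimate. Setting $M = c \, i^\alpha \log^{-\alpha(d-1)}(i)$ for a small constant $c > 0$ to be chosen, one computes
\[
    M^{1/\alpha} \log^{d-1}(M)
    \;=\;
    c^{1/\alpha} \, i \, \frac{\log^{d-1}(M)}{\log^{d-1}(i)},
\]
and since $\log(M) = \alpha \log(i) - \alpha(d-1)\log\log(i) + \log(c)$, the ratio $\log^{d-1}(M)/\log^{d-1}(i)$ is bounded by $(2\alpha)^{d-1}$ for all sufficiently large $i$. Choosing $c$ small enough that the resulting constant multiplying $i$ is strictly less than $1$ gives $|A_{d,\alpha,\bsgamma}(M)| < i$, and hence $\sigma_i < 1/M = c^{-1} \, i^{-\alpha} \log^{\alpha(d-1)}(i)$. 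The finitely many small indices $i$ for which this choice of $M$ fails are absorbed into the implied constant (which is allowed to depend on $\bsgamma$, $\alpha$, and $d$) using $\sigma_1 \le \max_\fraku \gamma_\fraku \le 1$. The only delicate point is the bookkeeping of logarithmic factors needed to convert $\log(M)$ into $\log(i)$; everything else is a direct consequence of \RefLem{lem:hypcrossbound} and the sorting definition of $(\sigma_i)$.
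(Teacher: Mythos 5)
Your proof is correct and follows essentially the same route as the paper: reduce the weighted hyperbolic cross to the unweighted one, apply \RefLem{lem:hypcrossbound}, and invert the resulting cardinality bound using the fact that $i \le |A_{d,\alpha,\bsgamma}(\ralpha(\bsh_i))|$ (you phrase this as the contrapositive with an explicitly chosen $M \asymp i^\alpha \log^{-\alpha(d-1)}(i)$, whereas the paper bootstraps from $M = \ralpha(\bsh_i)$, but this is only a cosmetic difference in how the inversion is carried out).
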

\begin{proof}
    Recall the definition of the weighted hyperbolic cross given in~\eqref{eq:hyperboliccross}.
    Since $A_{d,\alpha,\bsgamma}(M) \subseteq A_d((M \max_\fraku \gamma_\fraku)^{1/\alpha})$,
    Lemma~\ref{lem:hypcrossbound} implies
    \begin{equation}\label{eq:HCbound}
	|A_{d,\alpha,\bsgamma}(M)|
	\le
	C_d \,
	(M \max_\fraku \gamma_\fraku)^{1/\alpha} \log^{d-1}((M \max_\fraku \gamma_\fraku)^{1/\alpha})
	\lesssim
	M^{1/\alpha} \log^{d-1}(M^{1/\alpha})
	,
    \end{equation}
    when $(M \max_\fraku \gamma_\fraku)^{1/\alpha} \ge 2$ and the implied constant depends on $d$, $\alpha$ and $\bsgamma$.
    By definition we have $i \le |A_{d,\alpha,\bsgamma}(\ralpha(\bsh_i))|$ and hence for $M = \ralpha(\bsh_i)$ we find from~\eqref{eq:HCbound}
    \begin{align*}
      (\ralpha(\bsh_i) \max_\fraku \gamma_\fraku)^{1/\alpha}
      \ge
      \frac{i}{C_d \log^{d-1}((\ralpha(\bsh_i)\max_\fraku \gamma_\fraku)^{1/\alpha})}
      \ge
      \frac{i}{C_d}
    \end{align*}%
    where the last inequality holds for sufficiently large $i$ such that $(\ralpha(\bsh_i) \max_\fraku \gamma_\fraku)^{1/\alpha} \ge \rme$.
    Hence, from~\eqref{eq:HCbound}, for $i$ sufficiently large we have
    \begin{align*}
      \sigma_{\bsh_i}^{-1} =
      \ralpha(\bsh_i)
      \ge
      \frac{i^\alpha}{C_d^\alpha (\max_\fraku \gamma_\fraku) \log^{\alpha(d-1)}(i/C_d)}
      \gtrsim
      i^\alpha \, \log^{-\alpha(d-1)}(i)
    \end{align*}%
    where the implied constant depends on $d$, $\alpha$ and $\bsgamma$.
\end{proof}

\subsection{Generated sets}

We recall that the generated set for $n \in \Npos$ and $\bszeta \in [0,1]^d$ is defined as the set of points
\[
    \Pnzeta := \left\{ \{ k \bszeta \} \st k = 1, \ldots, n \right\}
    ,
\]
which do not include a point at the origin.
An illustration is given in Figure~\ref{fig:generated-set}.
We will provide a bound on the worst-case error of the least squares algorithm, defined in Section~\ref{sec:leastsquaresalg} below, using these point sets for function approximation in the space $\Hsigma$.

\begin{figure}
    \centering
    \begin{subfigure}{0.33\linewidth}
        \centering
        \includegraphics[width=\linewidth]{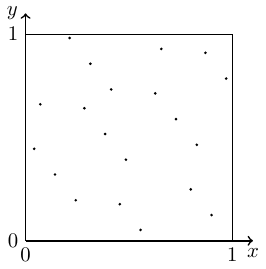}
        \caption{$n = 20$}
    \end{subfigure}%
    \begin{subfigure}{0.33\linewidth}
        \centering
        \includegraphics[width=\linewidth]{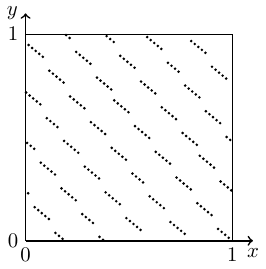}
        \caption{$n = 200$}
    \end{subfigure}%
    \begin{subfigure}{0.33\linewidth}
        \centering
        \includegraphics[width=\linewidth]{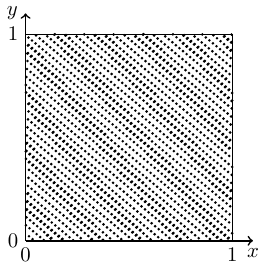}
        \caption{$n = 2000$}
    \end{subfigure}
    \caption{Generated sets with generator $\bszeta = (\sqrt{2} - 1, \sqrt{3} - 1)$ and progressively more sample points.}
    \label{fig:generated-set}
\end{figure}

We will also use the following notation to denote the generated sets with rational components.
For $N \in \Npos$, $\bsz \in \{1,\ldots,N\}^d$ and with $n \le N$, this is given as
\[
    \PnNz = \left\{ \left\{\frac{k \bsz}{N}\right\} \st k = 1, \ldots, n \right\}
    .
\]
This notation is used to highlight the similarity between this set and a rank-$1$ lattice.
The parameter $\bsz$ is referred to as a generating vector and $N$ is the denominator.
This set is a rank-$1$ lattice when $N = n$.

\subsection{The least squares algorithm} \label{sec:leastsquaresalg}

Suppose $P_n = \{ \bsx_1, \ldots, \bsx_n \} \subset [0, 1]^d$ is an arbitrary set of points at which the function will be sampled and recall the definition of the ordering on $\bsh \in \Z^d$, where the values $\bsh_i \in \Z^d$ for $i = 1, \ldots, m$, will be the indices corresponding to the $m$ largest values of $\sigma_\bsh$.
With $m \le n$, we define
\[
    \lsq(f)(\bsx)
    :=
    \sum_{i = 1}^{m} \hat{f}_i^\LS \phii(\bsx)
    ,
\]
where the $\hat{f}_i^\LS$ are chosen to minimise
\begin{equation} \label{eq:LSmin}
    \sum_{k = 1}^{n} \left| \lsq(f)(\bsx_k) - f(\bsx_k) \right|^2
    .
\end{equation}
We define the $n \times m$ matrix
\[
    \Phinm := \big[\phii(\bsx_k) \big]_{k = 1, i = 1}^{n,m}
    ,
\]
and the two vectors
\[
    \hat{\bsf}_m^\LS = \left[ \hat{f}_i^\LS \right]_{i = 1}^m
    \qquad
    \text{and}
    \qquad
    \bsf_\Pn = \left[ f(\bsx_k) \right]_{k = 1}^n
    .
\]
The quantity~\eqref{eq:LSmin} is therefore equal to
\[
    \left\| \Phinm \, \hat{\bsf}_m^\LS - \bsf_\Pn \right\|_{2}^2
    ,
\]
and so if $\hat{\bsf}_m^\LS$ is the least squares solution to this problem, then it satisfies
\begin{equation} \label{eq:normalequation}
    \Phinm^* \Phinm \, \hat{\bsf}_m^\LS = \Phinm^* \, \bsf_\Pn
    .
\end{equation}
Using the Moore--Penrose inverse, we can write
\[
    \hat{\bsf}_m^\LS
    =
    \Phinm^+ \, \bsf_\Pn
    ,
\]
where, if $\Phinm$ has full rank, we have $\Phinm^+ = \left( \Phinm^* \Phinm \right)^{-1} \Phinm^*$.
We extend the above notation to include
\begin{align*}
    \Phininf
    := \big[ \phii(\bsx_k) \big]_{k = 1, i = 1}^{n, \infty}
    \in \CC^{n \times \infty}
    ,
    \quad
    \hat{\bsf}
    :=
    \big[\hat{f}_i\big]_{i = 1}^\infty
    \in \ell_2(\Npos; \CC)
    \quad
    \text{and}
    \quad
    \hat{\bsf}_m
    :=
    \big[\hat{f}_i\big]_{i = 1}^m
    \in \CC^m
    ,
\end{align*}
as well as the truncated vector of functions
\[
    \Phi_m(\bsx) := \left[ \phii(\bsx) \right]_{i = 1}^m
    \quad
    \text{for}
    \quad
    \bsx \in [0, 1]^d
    .
\]
With this in place, we can write
\begin{align} \label{eq:lsmatrix}
    \lsq(f)(\bsx)
    &=
    \sum_{i = 1}^{m} \hat{f}_i^\LS \phii(\bsx)
    \nonumber
    \\
    &=
    \Phi_m^T(\bsx) \, \Phinm^+ \, \bsf_\Pn
    \nonumber
    \\
    &=
    \Phi_m^T(\bsx) \, \Phinm^+ \Phininf \, \hat{\bsf}
    .
\end{align}
We assume that the point sets are chosen such that the matrix $\Phinm$ has full rank and hence $\Phinm^+ = \left( \Phinm^* \Phinm \right)^{-1} \Phinm^*$.
This assumption is reasonable because, as shown in~\cite{K2014}, the set of generators which do not yield a matrix with full rank has Lebesgue measure zero.

\subsection{Error quantity}

In order to determine the accuracy of the algorithm we will use the deterministic worst-case error in $L_2$. This is given as
\[
    \ewor(\lsq) := \sup_{\substack{f \in \Hsigma \\ \| f \|_{\Hsigma} \le 1}} \left\| f - \lsq(f) \right\|_{L_2}
    .
\]
We specify explicitly that this is the deterministic error due to the fact that we demonstrate that good points for the algorithm exist using probabilistic methods but the approximation algorithm itself is deterministic.
For the Korobov space, we write this as
\[
    \eworKor{d}(\lsq) := \sup_{\substack{f \in \KorSpace \\ \normKor{f} \le 1}} \left\| f - \lsq(f) \right\|_{L_2}
    .
\]

\section{The bound for a continuous generator} \label{sec:continuousbound}

Here we provide a proof that there are significantly many generators which provide a certain bound on the worst-case error for generated sets in the space $\Hsigma$.
This bound will be shown to be of the optimal order if the space
has at least polynomial convergence of the sequence~$\bssigma$.
We will use the fact that
\begin{equation} \label{eq:divisorbound}
    \sum_{\substack{i = -n \\ i | n}}^n 1 \le C_\epsilon \, n^\epsilon
    \quad
    \text{for all}
    \quad
    \epsilon > 0
    ,
\end{equation}
where the constant $C_\epsilon \ge 1$ will retain this definition throughout, see~\cite[Section~18.1]{HW1960}.
\begin{theorem}\label{thm:upperbound}
    For the space $\Hsigma$, and $n, m \in \Npos$ with $m \le n$
    and $\bszero \in \{ \bsh_i \st i \le m \}$,
    there are at least a third of the possible choices for $\bszeta \in [0,1]^d$ for which the least squares algorithm $\lsqzeta$, with $\Pnzeta = \{ \{ k \bszeta \} \in [0, 1]^d \st k = 1, \ldots, n \}$, achieves a worst-case error that can be bounded above by
    \begin{align*}
	&\ewor(\lsqzeta)
	\\*
	&\hspace{7mm}\le
	\sigma_{m+1} + \sqrt{\sum_{i > m} \sigma_i^2 + \sqrt{6 C_\epsilon \, n^{1+\epsilon} \sum_{i > m} \sigma_i^4 \, \| \bsh_i \|_{\infty}^\epsilon}} \Bigg/ \sqrt{n - \sqrt{6 C_\epsilon \, n^{1+\epsilon} \, m \max_{i = 1, \ldots, m} \| \bsh_i \|_{\infty}^\epsilon}}
	,
    \end{align*}
    for all $0 < \epsilon \le 1$.

    Furthermore, if
    \begin{equation}\label{eq:hinfbound}
	\| \bsh_i \|_{\infty} \le \Cinfty \, i^r
	\quad
	\text{for some}
	\quad
	\Cinfty \ge 1, \; r > 0
	,
    \end{equation}
    and
    \begin{equation}\label{eq:mbound}
	n^{1-\epsilon} \ge 24 C_\epsilon \Cinfty^\epsilon \, m^{1+r\epsilon}
	,
    \end{equation}
    then the bound will have the form
    \begin{align*}
	\ewor(\lsqzeta)
	&\le
	\sigma_{m+1} + \sqrt{\frac{2}{n}\sum_{i > m} \sigma_i^2 + \sqrt{24 C_\epsilon \Cinfty^\epsilon \, \frac{1}{n^{1-\epsilon}} \sum_{i > m} i^{r\epsilon} \, \sigma_i^4}}
	\\
	&\le
	\sigma_{m+1} + \sqrt{\frac{1}{m}\sum_{i > m} \sigma_i^2 + \sqrt{\frac{1}{m^{1+r\epsilon}} \sum_{i > m} i^{r\epsilon} \, \sigma_i^4}}
	\qquad\qquad
	\text{for all}
	\quad
	0 < \epsilon \le 1
	.
    \end{align*}
\end{theorem}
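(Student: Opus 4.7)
The plan is to combine an $L_2$-orthogonal splitting of the error with a union bound over two bad events for a uniformly random generator $\bszeta \in [0,1]^d$. Writing $f = P_m f + g$ with $g := \sum_{i > m}\hat f_i\phi_i$, the full-column-rank assumption on $\Phinzetam$ makes $\lsqzeta$ reproduce every element of $\mathrm{span}\{\phi_i : i \le m\}$, so $f - \lsqzeta(f) = g - \lsqzeta(g)$. Since $g \perp \mathrm{span}\{\phi_i : i \le m\} \ni \lsqzeta(g)$ in $L_2$, Pythagoras gives
\[
\|f - \lsqzeta(f)\|_{L_2}^2 = \|g\|_{L_2}^2 + \|\lsqzeta(g)\|_{L_2}^2,
\]
and $\sqrt{a^2+b^2} \le a + b$ produces the two-summand shape of the theorem. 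The first summand is bounded by $\sigma_{m+1}$ from the ordering of $\bssigma$ and $\|f\|_\Hsigma \le 1$; for the second, I start from $\|\lsqzeta(g)\|_{L_2}^2 \le \|\bsg_{\Pnzeta}\|_2^2 / \lambda_{\min}(\Phinzetam^*\Phinzetam)$.

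Because $|\phi_i| = 1$, the diagonal of $\Phinzetam^*\Phinzetam$ is deterministically $n$, and its off-diagonal entries are $S_{ij}(\bszeta) := \sum_{k=1}^n \rme^{2\pi\rmi k(\bsh_j - \bsh_i)\cdot\bszeta}$. Using $\lambda_{\min} \ge n - \|\Phinzetam^*\Phinzetam - nI_m\|_F$ reduces the control of the denominator to bounding $\sum_{i \ne j \le m}|S_{ij}|^2$ from above by $C$. For the numerator, splitting $\|\bsg_{\Pnzeta}\|_2^2 = \sum_{j,j' > m}\hat f_j\overline{\hat f_{j'}} S_{jj'}$ into diagonal and off-diagonal, the diagonal contributes at most $n\sigma_{m+1}^2 \le n\sum_{i > m}\sigma_i^2$, which is absorbed into the $\sum_{i > m}\sigma_i^2$ part of the numerator, while Cauchy--Schwarz on the off-diagonal (taken over the weights $\sigma_j^{-1}\hat f_j$) reduces the worst case over $\|f\|_\Hsigma \le 1$ to bounding $\sum_{j \ne j' > m}\sigma_j^2\sigma_{j'}^2 |S_{jj'}|^2$ from above by $B$.

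Each of the two thresholds $B$ and $C$ is then established by Markov's inequality on an appropriate second moment. The basic arithmetic input is $\mathbb{E}_\bszeta |S_\bsdelta|^2 = n$ for any $\bsdelta \in \Zdnull$, obtained directly from $\int_0^1 \rme^{2\pi\rmi a\zeta}\rd\zeta = \delta_{a,0}$ for $a \in \Z$; combined with the trivial $|S_\bsdelta| \le n$, interpolation yields $\mathbb{E}|S_\bsdelta|^{2+\epsilon} \le n^{1+\epsilon}$, which produces the $n^{1+\epsilon}$ factor in $B$ and $C$. The $\|\bsh_i\|_\infty^\epsilon$ weight and the divisor factor $C_\epsilon$ come from the cross-moment step: in $\mathbb{E}[|S_\bsdelta|^2 |S_{\bsdelta'}|^2]$ the non-diagonal contribution is non-zero only when $\bsdelta$ and $\bsdelta'$ are rationally parallel, i.e.\ $k\bsdelta = k'\bsdelta'$ with $k, k' \in \{1,\ldots,n\}$, and the number of such $(k,k')$ is controlled via the divisor bound~\eqref{eq:divisorbound} applied to the greatest common divisor of the entries of $\bsdelta$ (which is bounded by $\|\bsdelta\|_\infty$, hence by $\max_{i \le m}\|\bsh_i\|_\infty$ or by $\|\bsh_i\|_\infty$ term-by-term in the respective sums). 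Choosing the constant $6$ in $B$ and $C$ so that each Markov tail has probability at most $1/3$ and intersecting the complements of the two bad events yields the first-form bound with probability at least $1/3$.

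For the simplified second form, insert~\eqref{eq:hinfbound} to replace each $\|\bsh_i\|_\infty^\epsilon$ by $\Cinfty^\epsilon i^{r\epsilon}$, and then use~\eqref{eq:mbound} to guarantee $\sqrt{C} \le n/2$ so that $n - \sqrt{C} \ge n/2$; this turns the leading $\sum_{i > m}\sigma_i^2/(n - \sqrt{C})$ into $2\sum_{i > m}\sigma_i^2/n$ and the nested root into $\sqrt{24 C_\epsilon\Cinfty^\epsilon n^{\epsilon - 1}\sum_{i > m} i^{r\epsilon}\sigma_i^4}$, and a final use of~\eqref{eq:mbound} inside this nested root pulls out the cleaner $m^{-(1+r\epsilon)}$ factor. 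The main obstacle is the cross-moment calculation that produces $C_\epsilon n^\epsilon$ with the correct $\|\bsh_i\|_\infty^\epsilon$ weight: it requires simultaneously enumerating the rationally parallel pairs of Fourier differences, translating the divisor bound on the $\gcd$ of a difference vector into the clean $\|\bsh_i\|_\infty^\epsilon$ appearing in the theorem, and tracking the constant $6$ through the two Markov estimates so that their tails sum to at most $2/3$.
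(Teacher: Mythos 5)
Your architecture matches the paper's at the top level: split off the projection onto $\mathrm{span}\{\phi_i : i \le m\}$ (your Pythagoras identity is in fact slightly sharper than the paper's triangle inequality), bound that piece by $\sigma_{m+1}$, and control the remainder by a ratio of two random quantities, each handled by a moment inequality so that a union bound leaves at least a third of the generators. The divisor-bound mechanism you invoke for resonant pairs is also the correct arithmetic ingredient. However, the probabilistic core of your argument has a genuine quantitative gap: both of your reductions pass to Frobenius-type quantities whose \emph{means} already exceed the thresholds $B$ and $C$ you need to establish, so no Markov or Chebyshev argument can close them. For the denominator, you reduce to showing $\sum_{i \ne j \le m} |S_{ij}|^2 \le C$ with $C = 6 C_\epsilon n^{1+\epsilon} m \max_{i \le m} \|\bsh_i\|_\infty^\epsilon$. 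But $\bbE_\bszeta |S_{ij}|^2 = n$ for every pair with $\bsh_i \ne \bsh_j$, so $\bbE_\bszeta\big[\sum_{i \ne j \le m} |S_{ij}|^2\big] = n m (m-1)$; under \eqref{eq:mbound} one has $C \le n^2/4$, while $n m (m-1) \gg n^2/4$ as soon as $m \gg \sqrt{n}$ — which is exactly the regime $m \sim n^{(1-\epsilon)/(1+r\epsilon)}$ needed for Corollary~\ref{cor:sobolevzetabound}. A nonnegative random variable cannot be shown, by first or second moments, to lie below a threshold that is far below its mean. The numerator has the same defect: $\bbE_\bszeta\big[\sum_{j \ne j' > m} \sigma_j^2 \sigma_{j'}^2 |S_{jj'}|^2\big] = n \sum_{j \ne j'} \sigma_j^2 \sigma_{j'}^2 \approx n \big(\sum_{i>m}\sigma_i^2\big)^2$, which for polynomially decaying $\bssigma$ exceeds $6 C_\epsilon n^{1+\epsilon} \sum_{i>m}\sigma_i^4 \|\bsh_i\|_\infty^\epsilon$ by a polynomial factor in $n$.

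The paper avoids this by never leaving the quadratic forms: Lemma~\ref{lem:singvalbounds} applies Chebyshev to $\|\Phinzetam \bst\|_2^2$ and to $\|\Dlambdam^{\,T}\Phinzetainf^* \bst\|_2^2$ for unit vectors $\bst$, whose means are exactly $n$ and $\sum_{i>m}\sigma_i^2$; the normalization $\|\bst\|_2 = 1$ is precisely what collapses the double sums appearing in the variance to a single factor of $m$ (respectively $\sum_{i>m}\sigma_i^4\|\bsh_i\|_\infty^\epsilon$) rather than $m^2$ (respectively $(\sum_{i>m}\sigma_i^2)^2$). If you want to keep your cleaner $L_2$-orthogonal decomposition, you must replace the bounds $\lambda_{\min} \ge n - \|\Phinzetam^*\Phinzetam - nI\|_F$ and the Cauchy--Schwarz step on the off-diagonal of $\|\bsg_{\Pnzeta}\|_2^2$ with variance estimates for the quadratic forms themselves. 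A secondary inaccuracy: the factor $n^{1+\epsilon}$ does not arise from interpolating $\bbE|S_\bsdelta|^{2+\epsilon} \le n^{1+\epsilon}$; in the paper it appears as $C_\epsilon n^\epsilon$ from the divisor count applied to the resonance $\bsh_i(k-k') + \bsh_j(\ell-\ell') = \bszero$, times an additional factor $n$ from $\sum_{k \ne k'} t_k t_{k'} \le n$.
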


In order to prove this theorem, we will make use of Lemma~\ref{lem:singvalbounds} which will be given after this proof.
\begin{proof}
    We proceed in a similar manner to the proof of~\cite[Theorem~1]{KU2021a}.
    Set~$\calP_m$ to be the projection
    \[
	\calP_m f = \sum_{i = 1}^m \hat{f}_i \, \phii
    ,
    \]
    then we can bound the error for an arbitrary function $f \in \Hsigma$ as
    \begin{align*}
        \left\| f - \lsqzeta(f) \right\|_{L_2}
        &\le
        \left\| f - \calP_m f  \right\|_{L_2} + \left\| \lsqzeta(f) - \calP_m f  \right\|_{L_2}
	\\
	&=
	\sqrt{\sum_{i > m} |\hat{f}_i|^2} + \left\| \lsqzeta(f) - \calP_m f  \right\|_{L_2}
	\\
	&\le
	\sqrt{\sum_{i > m} \frac{\sigma_{m+1}^2}{\sigma_i^2}  |\hat{f}_i|^2} + \left\| \lsqzeta(f) - \calP_m f  \right\|_{L_2}
	\\
	&\le
	\sigma_{m+1} \big\| f \big\|_{\Hsigma} + \left\| \lsqzeta(f) - \calP_m f  \right\|_{L_2}
	.
    \end{align*}
    Using the fact that $\Imm = \Phinzetam^+ \Phinzetam$, and defining $\Rminf := \big[ \Imm \: \bszero \big] \in \R^{m \times \infty}$, we can rewrite the action of $\calP_m$ as
    \begin{align*}
	(\calP_m f)(\bsx)
	&=
	\Phim^T(\bsx) \, \hat{\bsf}_m
	\\
	&=
	\Phim^T(\bsx) \, \Imm \, \Rminf \, \hat{\bsf}
	\\
	&=
	\Phim^T(\bsx) \, \Phinzetam^+ \Phinzetam \Rminf \, \hat{\bsf}
	\\
	&=
	\Phim^T(\bsx) \, \Phinzetam^+ \Phinzetainf \Rminf^T \Rminf \, \hat{\bsf}
	,
    \end{align*}
    and then, using~\eqref{eq:lsmatrix}, we focus on the second term in the above bound
    \begin{equation*}
	\left\| \lsqzeta(f) - \calP_m f  \right\|_{L_2}
	=
	\left\| \Phim^T(\,\cdot\,) \, \Phinzetam^+ \Phinzetainf (\Iinf - \Rminf^T \Rminf) \, \hat{\bsf} \right\|_{L_2}
	,
    \end{equation*}
    where we use $\Iinf$ to represent the identity, $\Iinf : \ell_2(\Npos; \CC) \to \ell_2(\Npos; \CC)$.
    We note that $\Iinf - \Rminf^T \Rminf$ is a diagonal operator
    \[
    \Iinf - \Rminf^T \Rminf
    =
    \begin{bmatrix}
	\bszero_{m \times m} & \bszero \\
	\bszero & \Iinf
    \end{bmatrix}
    =
    \bszero_{m \times m} \oplus \Iinf
    .
    \]
    We also define the bounded linear operator $\Dlambda := \diag(\sigma_i \st i = 1, \ldots, \infty)$ from $\ell_2(\Npos; \CC)$ to $\ell_2(\Npos; \CC)$ and conclude that
    \begin{align}
    \notag
        \left\| \lsqzeta(f) - \calP_m f  \right\|_{L_2}
	&=
	\left\| \Phinzetam^+ \Phinzetainf (\Iinf - \Rminf^T \Rminf) \hat{\bsf} \right\|_{2}
	\\\notag
	&=
	\left\| \Phinzetam^+ \Phinzetainf (\Iinf - \Rminf^T \Rminf) \Dlambda \, \Dlambda^{-1} \, \hat{\bsf} \right\|_{2}
	\\\notag
	&\le
	\left\| \Phinzetam^+ \right\|_{2 \to 2} \, \left\| \Phinzetainf (\Iinf - \Rminf^T \Rminf) \Dlambda \right\|_{2 \to 2} \, \big\| f \big\|_{\Hsigma}
	\\\label{eq:error-bound-ops}
	&=
	\left\| \Phinzetam^+ \right\|_{2 \to 2} \, \left\| \Phinzetainf \Dlambdam \right\|_{2 \to 2} \, \big\| f \big\|_{\Hsigma}
	,
    \end{align}
    where $\| \cdot \|_{2 \to 2}$ represents the operator norm from Euclidean space to itself, which is notably equal to the largest singular value of the operator, and we defined $\Dlambdam := (\Iinf - \Rminf^T \Rminf) \Dlambda$.
    We, therefore, wish to bound the largest singular values of the operators $\Phinzetam^+$ and $\Phinzetainf \Dlambdam$.

    For the operator $\Phinzetainf \Dlambdam$, we use Chebyshev's inequality and the first part of Lemma~\ref{lem:singvalbounds}, with $a_i = \sigma_i$ and $\calI = \{ i \in \N : i > m \}$, and the fact that the adjoint of an operator shares the same non-zero singular values as the operator itself, to conclude that, for at least $2/3$ of the possible choices of $\bszeta$, we have
    \begin{align*}
	\left\| \Phinzetainf \Dlambdam \right\|_{2 \to 2}^2
	=
	\max_{\substack{\bst \in \CC^n \\ \| \bst \|_{2} = 1}} \left\| \Dlambdam^{\,T} \Phinzetainf^* \, \bst \right\|_{2}^2
	\le
	\sum_{i > m} \sigma_i^2 + \sqrt{6 C_\epsilon \, n^{1+\epsilon} \sum_{i > m} \sigma_i^4 \, \| \bsh_i \|_{\infty}^\epsilon}
	.
    \end{align*}
    Using Chebyshev's inequality and the second part of Lemma~\ref{lem:singvalbounds}, with all $a_i = 1$ and $\calI = \{1,\ldots,m\}$, along with the fact that the largest singular value of the Moore--Penrose inverse of a matrix is the reciprocal of the smallest non-zero singular value of the matrix (recalling that we assume $\Phinm$ is non-singular, since the set of values of $\bszeta \in [0,1]^d$ for which this is not satisfied has measure zero), we can conclude that, for at least $2/3$ of the possible choices of $\bszeta$, we have
    \begin{align*}
        \left\| \Phinzetam^+ \right\|_{2 \to 2}^{-2}
	&=
	\min_{\substack{\bst \in \CC^m \\ \| \bst \|_{2} = 1}} \left\| \Phinzetam \bst \right\|_{2}^2
	\ge
	n - \sqrt{6 C_\epsilon \, n^{1+\epsilon} \, m \max_{i = 1, \ldots, m} \| \bsh_i \|_{\infty}^\epsilon}
	.
    \end{align*}
    The amount of possibilities for which both of these bounds holds is therefore greater than $1/3$, and so, for all such values of $\bszeta$, we have
    \begin{align*}
	&\left\| f - \lsqzeta(f) \right\|_{L_2}
	\\*
	&\hspace{1mm}\le
	\sigma_{m+1} \, \big\| f \big\|_\Hsigma + \left\| \Phinzetainf \Dlambdam \right\|_{2 \to 2} \, \left\| \Phinzetam^+ \right\|_{2 \to 2} \, \big\| f \big\|_\Hsigma
	\\
	&\hspace{1mm}\le
	\Bigg( \sigma_{m+1} + \! \sqrt{\sum_{i > m} \sigma_i^2 + \! \sqrt{6 C_\epsilon \, n^{1+\epsilon} \sum_{i > m} \sigma_i^4 \, \| \bsh_i \|_{\infty}^\epsilon}} \Bigg/ \! \sqrt{n - \! \sqrt{6 C_\epsilon \, n^{1+\epsilon} \, m \max_{i = 1, \ldots, m} \| \bsh_i \|_{\infty}^\epsilon}} \Bigg) \, \big\| f \big\|_\Hsigma
	.
    \end{align*}
    The bound on the error follows from this.
    For the final part of the proof we can recognise that~\eqref{eq:hinfbound} produces
    \begin{align*}
	&\ewor(\lsqzeta)
	\\*
	&\hspace{7mm}\le
	\sigma_{m+1} + \sqrt{\sum_{i > m} \sigma_i^2 + \sqrt{6 C_\epsilon \Cinfty^\epsilon \, n^{1+\epsilon} \sum_{i > m} i^{r \epsilon} \, \sigma_i^4}} \Bigg/ \sqrt{n - \sqrt{6 C_\epsilon \Cinfty^\epsilon \, n^{1+\epsilon} \, m^{1 + r \epsilon} }}
	,
    \end{align*}
    and~\eqref{eq:mbound} ensures that $n - \sqrt{6 C_\epsilon \Cinfty^\epsilon \, n^{1+\epsilon} \, m^{1+r\epsilon}} \ge n/2$.
\end{proof}

In the proof of the previous theorem we needed bounds on two operator norms which hold for a large subset of possible values of $\bszeta \in [0, 1]^d$, see~\eqref{eq:error-bound-ops}.
The following result provides these bounds for a class of operators that includes $\Phinzetam$ and $\Phinzetainf \Dlambdam$.

\begin{lemma} \label{lem:singvalbounds}
    Let $[a_i]_{i \in \calI} \in \R_{\ge 0}^{|\calI|}$, for $\calI \subseteq \N$, be a square summable non-increasing sequence.
    Let $i \mapsto \bsh_i$ be an injection from $\calI$ to $\Z^d$ and set
    \begin{align*}
      A = \big[ a_i \, \rme^{2 \pi \rmi \, k \bsh_i \cdot \bszeta} \big]_{k \in \{ 1, \ldots, n \}, i \in \calI} \; \in \; \CC^{n \times |\calI|}
      .
    \end{align*}
    Assume $\bszeta \in [0, 1]^d$ is a random variable uniformly distributed on $[0, 1]^d$, then the following statements hold.
    \begin{enumerate}
        \item For any $\bst \in \CC^n$,
        \[
	    \bbE_\bszeta\big[ \| A^* \bst \|_2^2 \big]
	    =
	    \sum_{i \in \calI} a_i^2 \, \sum_{k, \ell = 1}^{n} t_k \, \overline{t}_\ell \, \bbone(k \bsh_i = \ell \bsh_i)
	    .
        \]
        Additionally if $\|\bst\|_2 = 1$ then,
        for all $\epsilon > 0$,
        \[
	    \var_\bszeta\big[ \| A^* \bst \|_2^2 \big]
	    \le
	    2 C_\epsilon \, n^{1+\epsilon} \sum_{\substack{i \in \calI \\ \bsh_i \ne \bszero}} a_i^4 \, \| \bsh_i \|_{\infty}^\epsilon
	    .
        \]

        \item For any $\bst \in \CC^{|\calI|}$,
        \[
	    \bbE_\bszeta\big[ \| A \bst \|_{2}^2 \big]
	    =
	    n \sum_{i \in \calI} a_i^2 \, | t_i |^2
	    .
        \]
	Additionally if $\|\bst\|_2 = 1$ then, for all $0 < \epsilon \le 1$,
        \[
        \var_\bszeta\big[ \| A \bst \|_2^2 \big]
        \le
	    2 C_\epsilon \, n^{1+\epsilon} \Big( \max_{i \in \calI} a_i^2 \Big) \, \left( \sum_{i \in \calI} a_i^2 \, \| \bsh_i \|_{\infty}^{2\epsilon} \right)^{1/2} \left( \sum_{i \in \calI} a_i^2 \right)^{1/2}
	    .
        \]
    \end{enumerate}
\end{lemma}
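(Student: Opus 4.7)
The plan is to view $\|A^* \bst\|_2^2$ and $\|A \bst\|_2^2$ as absolutely convergent Fourier series in $\bszeta$, read off each expectation as the constant Fourier coefficient via the orthogonality $\EE_\bszeta[e^{2\pi \rmi \bsk \cdot \bszeta}] = \bbone(\bsk = \bszero)$ for $\bsk \in \Z^d$, and obtain each variance as the sum of squared moduli of the non-constant Fourier coefficients by Parseval. Expanding $|(A^* \bst)_i|^2 = a_i^2 \sum_{k, \ell} t_k \bar{t}_\ell \, e^{-2 \pi \rmi (k - \ell) \bsh_i \cdot \bszeta}$ and summing over $i$ yields the first expectation formula directly, while the analogous expansion $|(A \bst)_k|^2 = \sum_{i, j} a_i a_j t_i \bar{t}_j \, e^{2 \pi \rmi k (\bsh_i - \bsh_j) \cdot \bszeta}$, combined with the fact that $k (\bsh_i - \bsh_j) = \bszero$ for $k \ge 1$ forces $i = j$ by the injectivity of $i \mapsto \bsh_i$, gives $\EE[\|A \bst\|_2^2] = n \sum_i a_i^2 |t_i|^2$.

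For the variance in part~1, I would write $\|A^* \bst\|_2^2 = \sum_\bsk \beta_\bsk \, e^{-2 \pi \rmi \bsk \cdot \bszeta}$ with $\beta_\bsk = \sum_{(u, i) : u \bsh_i = \bsk} a_i^2 c_u$ and $c_u := \sum_{k - \ell = u} t_k \bar{t}_\ell$, the discrete autocorrelation of $\bst$, so Parseval gives $\var[\|A^* \bst\|_2^2] = \sum_{\bsk \ne \bszero} |\beta_\bsk|^2$. For each $\bsk \ne \bszero$, the pairs $(u, i)$ contributing to $\beta_\bsk$ satisfy $u \ne 0$, $\bsh_i \ne \bszero$, and $u$ divides every component of $\bsk$; by the divisor bound~\eqref{eq:divisorbound} applied to the gcd of the non-zero components of $\bsk$, there are at most $2 C_\epsilon \|\bsk\|_\infty^\epsilon$ such pairs, with $\bsh_i = \bsk/u$ then fixing $i$ by injectivity. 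Cauchy--Schwarz followed by swapping the order of summation reduces the variance to at most $2 C_\epsilon \, \bigl(\sum_{u \ne 0} |u|^\epsilon |c_u|^2\bigr)\bigl(\sum_{i : \bsh_i \ne \bszero} a_i^4 \|\bsh_i\|_\infty^\epsilon\bigr)$, and the factor $\sum_u |u|^\epsilon |c_u|^2 \le n^\epsilon \sum_u |c_u|^2 \le n^{1 + \epsilon}$ closes the proof, using $\sum_u |c_u|^2 = \int_0^1 |T(\theta)|^4 \rmd\theta \le \|T\|_\infty^2 \le n$ for $T(\theta) := \sum_k t_k \, e^{-2 \pi \rmi k \theta}$ together with $|T(\theta)| \le \sqrt{n}$ from Cauchy--Schwarz.

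For part~2, I would similarly write $\|A \bst\|_2^2 = \sum_\bsk \gamma_\bsk \, e^{2\pi \rmi \bsk \cdot \bszeta}$ with $\gamma_\bsk = \sum_{k \in \{1, \ldots, n\}, \, k \mid \bsk} B_{\bsk/k}$, where $B_\bsa := \sum_{i, j : \bsh_i - \bsh_j = \bsa} a_i a_j t_i \bar{t}_j$ collects pairs sharing the common difference $\bsa$. Cauchy--Schwarz, the divisor bound $\#\{k \le n : k \mid \bsk\} \le C_\epsilon \|\bsk\|_\infty^\epsilon$, the change of variables $\bsa = \bsk/k$, and $\sum_{k \le n} k^\epsilon \le n^{1 + \epsilon}$ reduce the variance to at most $C_\epsilon \, n^{1 + \epsilon} \sum_{\bsa \ne \bszero} \|\bsa\|_\infty^\epsilon |B_\bsa|^2$. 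The main obstacle is to bound this residual sum by the stated three-factor product. My plan is to expand $|B_\bsa|^2$ as a sum over quadruples $(i, j, p, q)$ with $\bsh_i - \bsh_j = \bsh_p - \bsh_q = \bsa$, eliminate the outer sum over $\bsa$ (which is determined by $(i, j)$), apply the subadditivity $\|\bsh_i - \bsh_j\|_\infty^\epsilon \le \|\bsh_i\|_\infty^\epsilon + \|\bsh_j\|_\infty^\epsilon$ (valid for $0 < \epsilon \le 1$) and symmetrise in $i \leftrightarrow j$, use injectivity of $i \mapsto \bsh_i$ together with Cauchy--Schwarz to bound the inner $(p, q)$-sum by $\sum_p a_p^2 |t_p|^2 \le \max_i a_i^2$, and finish with two Cauchy--Schwarz inequalities (together with $\|\bst\|_2 = 1$) to split the remaining $\sum_{i, j} a_i a_j |t_i t_j| \, \|\bsh_i\|_\infty^\epsilon$ into $\bigl(\sum_i a_i^2 \|\bsh_i\|_\infty^{2\epsilon}\bigr)^{1/2} \bigl(\sum_j a_j^2\bigr)^{1/2}$.
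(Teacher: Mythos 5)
Your proposal is correct and reaches the stated constants, but it is organised differently from the paper's proof. The paper computes each variance directly as a quadruple sum of indicator functions, first reducing to $\bst \in \R_{\ge 0}^n$ so that cross terms can be dropped, then bounding termwise: the inner sum over the second index pair is collapsed to $\max_i a_i^2$ (resp.\ $a_i^2$) via injectivity and Cauchy--Schwarz, the divisor bound~\eqref{eq:divisorbound} counts the admissible differences $\ell - \ell'$ (resp.\ pairs $(k,\ell)$ with $\ell \mid k(\bsh_i - \bsh_j)$), and the autocorrelation estimates $\sum_{\ell - \ell' = p} t_\ell t_{\ell'} \le 1$ and $\sum_{k \ne k'} t_k t_{k'} \le n$ supply the factor $n^{1+\epsilon}$. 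You instead expand $\|A^*\bst\|_2^2$ and $\|A\bst\|_2^2$ as Fourier series in $\bszeta$, identify the variance with $\sum_{\bsk \ne \bszero}$ of the squared coefficient moduli via Parseval, and apply Cauchy--Schwarz over the divisor multiplicity of each frequency $\bsk$; the role of the paper's autocorrelation estimates is played by $\sum_u |c_u|^2 = \|T\|_4^4 \le \|T\|_\infty^2 \le n$. The underlying ingredients (divisor bound, injectivity of $i \mapsto \bsh_i$ to pin down the partner index, subadditivity of $\|\cdot\|_\infty^\epsilon$ for $0 < \epsilon \le 1$, and the final pair of Cauchy--Schwarz splittings in part~2) are identical, so this is a genuine but closely related alternative. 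What your route buys is that Parseval makes the cancellation of the mean automatic and renders the reduction to nonnegative $\bst$ unnecessary, since Cauchy--Schwarz on $|\beta_\bsk|^2$ and $|\gamma_\bsk|^2$ handles signs for free; what the paper's route buys is that it generalises directly to the rational-generator setting of Lemma~\ref{lem:rationalbound}, where the relevant indicator is a congruence modulo $N$ rather than an exact equality and the clean Parseval bookkeeping over $\Z^d$ no longer applies verbatim.
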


\begin{proof}
    For $\bst \in \CC^n$ we recognise that
    \begin{align*}
	\bbE_\bszeta\left[ \| A^* \bst \|_{2}^2 \right]
	&=
	\sum_{k, \ell = 1}^{n} \sum_{i \in \calI} a_i^2  \, t_k \, \overline{t}_\ell \, \bbE_\bszeta[ \rme^{2 \pi \rmi \, (k - \ell) \bsh_i \cdot \bszeta} ]
	=
	\sum_{k, \ell = 1}^{n} \sum_{i \in \calI} a_i^2 \, t_k \, \overline{t}_\ell \, \bbone(k \bsh_i = \ell \bsh_i)
	.
    \end{align*}
    The variance of $\| A^* \bst \|_{2}^2$ is
    \begin{align*}
	&\var_\bszeta\left[ \| A^* \bst \|_{2}^2 \right]
	\\*
	&\hspace{5mm}=
	\sum_{\substack{k, k' = 1 \\ \ell, \ell' = 1}}^n \sum_{i, j \in \calI} a_i^2 \, a_j^2 \, t_k \, \overline{t}_{k'} \, t_\ell \, \overline{t}_{\ell'} \, \Big( \bbone(\bsh_i (k - k') + \bsh_j (\ell - \ell') = \bszero) - \bbone(k \bsh_i = k' \bsh_i) \, \bbone(\ell \bsh_j = \ell' \bsh_j) \Big)
	\\*
	&\hspace{5mm}=
	\sum_{\substack{k, k' = 1 \\ \ell, \ell' = 1 \\ k \ne k' \\ \ell \ne \ell'}}^n \sum_{\substack{i, j \in \calI \\ \bsh_i \ne \bszero \\ \bsh_j \ne \bszero}} a_i^2 \, a_j^2 \, t_k \, \overline{t}_{k'} \, t_\ell \, \overline{t}_{\ell'} \, \bbone(\bsh_i (k - k') + \bsh_j (\ell - \ell') = \bszero)
	.
    \end{align*}
    We note that
    \begin{align*}
	&\max_{\substack{\bst \in \CC^n \\ \| \bst \|_2 = 1}} \var_\bszeta\left[ \| A^* \bst \|_{2}^2 \right]
	\\*
	&\hspace{5mm}\le
	\max_{\substack{\bst \in \CC^n \\ \| \bst \|_2 = 1}}
	\sum_{\substack{k, k' = 1 \\ \ell, \ell' = 1 \\ k \ne k' \\ \ell \ne \ell'}}^n \sum_{\substack{i, j \in \calI \\ \bsh_i \ne \bszero \\ \bsh_j \ne \bszero}} a_i^2 \, a_j^2  \, |t_k| \, |\overline{t}_{k'}| \, |t_\ell| \, |\overline{t}_{\ell'}| \, \bbone(\bsh_i (k - k') + \bsh_j (\ell - \ell') = \bszero)
	\\
	&\hspace{5mm}=
	\max_{\substack{\bst \in \R_{\ge 0}^n \\ \| \bst \|_2 = 1}}
	\var_\bszeta\left[ \| A^* \, \bst \|_{2}^2 \right]
	.
    \end{align*}
    That is to say that the maximum of the variance for $\bst \in \mathbb{C}^n$ with $\|\bst\|_2 = 1$ is bounded above for some $\bst \in \R_{\ge 0}^n \subset \CC^n$ with $\| \bst \|_2 = 1$ and hence the maximum is actually achieved for some $\bst \in \R_{\ge 0}^n$.
    Therefore, we now assume that $\bst \in \R_{\ge 0}^n$, with $\|\bst\|_2=1$, and bound the variance by the following
    \begin{align*}
	&\var_\bszeta\left[ \| A^* \bst \|_{2}^2 \right]
	\\*
	&\hspace{7mm}=
	\sum_{\substack{k, k' = 1 \\ \ell, \ell' = 1 \\ k \ne k' \\ \ell \ne \ell'}}^n \sum_{\substack{i, j \in \calI \\ \bsh_i \ne \bszero \\ \bsh_j \ne \bszero}} a_i^2 \, a_j^2  \, t_k \, t_{k'} \, t_\ell \, t_{\ell'} \, \bbone(\bsh_i (k - k') + \bsh_j (\ell - \ell') = \bszero)
	\\
	&\hspace{7mm}\le
	2 \sum_{\substack{k, k' = 1 \\ \ell, \ell' = 1 \\ k \ne k' \\ \ell \ne \ell'}}^n \sum_{\substack{i, j \in \calI \\ \bsh_i \ne \bszero \\ \bsh_j \ne \bszero \\ a_i \ge a_j}} a_i^2 \, a_j^2  \, t_k \, t_{k'} \, t_\ell \, t_{\ell'} \, \bbone(\bsh_i (k - k') + \bsh_j (\ell - \ell') = \bszero)
	\\
	&\hspace{7mm}=
	2 \sum_{\substack{k, k' = 1 \\ \ell, \ell' = 1 \\ k \ne k' \\ \ell \ne \ell'}}^n \sum_{\substack{i \in \calI \\ \bsh_i \ne \bszero}} a_i^2 \, t_k \, t_{k'} \, t_\ell \, t_{\ell'} \, \bbone((\ell - \ell') | (k - k') \bsh_i) \underbrace{\sum_{\substack{j \in \calI \\ \bsh_j \ne \bszero \\ a_i \ge a_j}} a_j^2 \, \bbone(\bsh_i (k - k') + \bsh_j (\ell - \ell') = \bszero)}_{\le a_i^2}
	\\
	&\hspace{7mm}\le
	2 \sum_{\substack{k, k' = 1 \\ k \ne k'}}^n \sum_{\substack{i \in \calI \\ \bsh_i \ne \bszero}} a_i^4 \, t_k \, t_{k'} \sum_{\substack{p \in \Z \\ p | (k - k') \bsh_i}} \underbrace{\sum_{\substack{\ell, \ell' = 1 \\ \ell \ne \ell' \\ \ell - \ell' = p}}^n \, t_\ell \, t_{\ell'}}_{\le 1}
	\\
	&\hspace{7mm}\le
	2 C_\epsilon \, n^\epsilon \sum_{\substack{i \in \calI \\ \bsh_i \ne \bszero}} a_i^4 \, \| \bsh_i \|_{\infty}^\epsilon \underbrace{\sum_{\substack{k, k' = 1 \\ k \ne k'}}^n t_k \, t_{k'}}_{\le n}
	\\
	&\hspace{7mm}\le
	2 C_\epsilon \, n^{1+\epsilon} \sum_{\substack{i \in \calI \\ \bsh_i \ne \bszero}}  a_i^4 \, \| \bsh_i \|_{\infty}^\epsilon
	\quad
	\text{for all}
	\quad
	\epsilon > 0
	.
    \end{align*}

    For the second part, we repeat this with $\| A \bst \|_{2}^2$, where now $\bst \in \CC^{|\calI|}$.
    We have
    \begin{align*}
	\bbE_\bszeta\left[ \| A \bst \|_{2}^2 \right]
	&=
	\sum_{i, j \in \calI} \sum_{k = 1}^{n} a_i \, a_j \, t_i \, \overline{t}_j \, \bbE_\bszeta[ \rme^{2 \pi \rmi \, k (\bsh_i - \bsh_j) \cdot \bszeta} ]
	=
	n \sum_{i \in \calI} a_i^2 \, |t_i|^2
	.
    \end{align*}
    Once again, the variance is
    \begin{align*}
	&\var_\bszeta\left[ \| A \bst \|_{2}^2 \right]
	\\*
	&\hspace{5mm}=
	\sum_{\substack{i, j \in \calI \\ i', j' \in \calI}} \; \sum_{k, \ell = 1}^{n} a_i \, a_j \, a_{i'} \, a_{j'}  \, t_i \, \overline{t}_j \, t_{i'} \, \overline{t}_{j'} \, \left[ \bbone\big(k(\bsh_i - \bsh_j) = \ell (\bsh_{i'} - \bsh_{j'}) \big) - \bbone\big( \bsh_i = \bsh_j \big) \, \bbone\big( \bsh_{i'} = \bsh_{j'} \big)\right]
	\\
	&\hspace{5mm}=
	\sum_{\substack{i, j, i', j' \in \calI \\ i \ne j, \; i' \ne j'}} \; \sum_{k, \ell = 1}^{n} a_i \, a_j \, a_{i'} \, a_{j'} \, t_i \, \overline{t}_j \, t_{i'} \, \overline{t}_{j'} \, \bbone\big(k(\bsh_i - \bsh_j) = \ell (\bsh_{i'} - \bsh_{j'}) \big)
	.
    \end{align*}
    Again, we have
    \[
        \max_{\substack{\bst \in \CC^{|\calI|} \\ \| \bst \|_2 = 1}} \var_\bszeta\left[ \| A \bst \|_{2}^2 \right]
	=
        \max_{\substack{\bst \in \R_{\ge 0}^{|\calI|} \\ \| \bst \|_2 = 1}} \var_\bszeta\left[ \| A \bst \|_{2}^2 \right]
	.
    \]
    So we will assume $\bst \in \R_{\ge 0}^{|\calI|}$, and $\|\bst\|_2 = 1$, and bound the variance as follows
    \begin{align*}
	&\var_\bszeta\left[ \| A \bst \|_{2}^2 \right]
	\\*
	&\hspace{10mm}=
	\sum_{\substack{i, j, i', j' \in \calI \\ i \ne j, \; i' \ne j'}} \; \sum_{k, \ell = 1}^{n} a_i \, a_j \, a_{i'} \, a_{j'} \, t_i \, t_j \, t_{i'} \, t_{j'} \, \bbone\big(k(\bsh_i - \bsh_j) = \ell (\bsh_{i'} - \bsh_{j'}) \big)
	\\
	&\hspace{10mm}=
	\sum_{\substack{i, j \in \calI \\ i \ne j}} \sum_{\substack{k, \ell = 1 \\ \ell | k (\bsh_i - \bsh_j)}}^{n} a_i \, a_j \, t_i \, t_j \hspace{-4mm} \sum_{\substack{i', j' \in \calI \\ i' \ne j' \\ k(\bsh_i - \bsh_j) = \ell (\bsh_{i'} - \bsh_{j'})}} \hspace{-4mm} a_{i'} \, a_{j'} \, t_{i'} \, t_{j'}
	\\
	&\hspace{10mm}\le
	\sum_{\substack{i, j \in \calI \\ i \ne j}} \sum_{\substack{k, \ell = 1 \\ \ell | k (\bsh_i - \bsh_j)}}^{n} a_i \, a_j \, t_i \, t_j
	\vast( \sum_{\substack{i', j' \in \calI \\ i' \ne j' \\ k(\bsh_i - \bsh_j) = \ell (\bsh_{i'} - \bsh_{j'})}} \hspace{-4mm} a_{i'}^2 \, t_{i'}^2 \vast)^{1/2}
	\vast( \sum_{\substack{i', j' \in \calI \\ i' \ne j' \\ k(\bsh_i - \bsh_j) = \ell (\bsh_{i'} - \bsh_{j'})}} \hspace{-4mm} a_{j'}^2 \, t_{j'}^2 \vast)^{1/2}
	\\
	&\hspace{10mm}\le
	\sum_{\substack{i, j \in \calI \\ i \ne j}} \sum_{\substack{k, \ell = 1 \\ \ell | k (\bsh_i - \bsh_j)}}^{n} a_i \, a_j \, t_i \, t_j \, \max_{i' \in \calI} a_{i'}^2
	\\
	&\hspace{10mm}\le
	C_\epsilon \, n^{1+\epsilon} \Big( \max_{i' \in \calI} a_{i'}^2 \Big) \sum_{\substack{i, j \in \calI \\ i \ne j}} a_i \, a_j \, t_i \, t_j \, \| \bsh_i - \bsh_j \|_{\infty}^\epsilon
	\\
	&\hspace{10mm}\le
	C_\epsilon \, n^{1+\epsilon} \Big( \max_{i' \in \calI} a_{i'}^2 \Big) \sum_{i, j \in \calI} a_i \, a_j \, t_i \, t_j \, \big( \| \bsh_i \|_{\infty} + \| \bsh_j \|_{\infty} \big)^\epsilon
	\\
	&\hspace{10mm}\le
	C_\epsilon \, n^{1+\epsilon} \Big( \max_{i' \in \calI} a_{i'}^2 \Big) \sum_{i, j \in \calI} a_i \, a_j \, t_i \, t_j \, \big( \| \bsh_i \|_{\infty}^\epsilon + \| \bsh_j \|_{\infty}^\epsilon \big)
	\\
	&\hspace{10mm}\le
	2 C_\epsilon \, n^{1+\epsilon} \Big( \max_{i' \in \calI} a_{i'}^2 \Big) \Bigg( \sum_{i \in \calI} \| \bsh_i \|_{\infty}^\epsilon \, a_i \, t_i \Bigg) \Bigg( \sum_{j \in \calI} a_j \, t_j \Bigg)
	\\
	&\hspace{10mm}\le
	2 C_\epsilon \, n^{1+\epsilon} \Big( \max_{i' \in \calI} a_{i'}^2 \Big) \Bigg( \sum_{i \in \calI} \| \bsh_i \|_{\infty}^{2 \epsilon} \, a_i^2 \Bigg)^{1/2} \Bigg( \sum_{j \in \calI} a_j^2 \Bigg)^{1/2}
	\quad
	\text{for all}
	\quad
	0 < \epsilon \le 1
	.
    \end{align*}
    This completes the proof.
\end{proof}

We can now apply Theorem~\ref{thm:upperbound} to spaces with polynomial convergence of the sequence~$\bssigma$.
\begin{corollary} \label{cor:sobolevzetabound}
    If the sequence $\bssigma$ is such that the following conditions hold
    \begin{enumerate}
	\item $\| \bsh_i \|_{\infty} \le \Cinfty \, i^r$ for some $\Cinfty \ge 1$, $r > 0$, and
	\item $\sigma_i \lesssim i^{-\alpha} \log^{\beta}(i)$,
    \end{enumerate}
    where $\alpha > 1/2$ and $\beta \in \R$,
    then for sufficiently large $n \in \Npos$, there exists $m \le n$ which satisfies~\eqref{eq:mbound}
    such that the bound from Theorem~\ref{thm:upperbound} becomes
    \[
	\ewor(\lsqzeta)
	\lesssim
	n^{-\frac{1-\epsilon}{1+r\epsilon} \alpha} \log^{\beta}(n)
	\quad
	\text{for all}
	\quad
	0 < \epsilon \le 1
	,
    \]
    where the implied constant goes to infinity as $\epsilon$ goes to $0$.
\end{corollary}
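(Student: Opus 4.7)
The plan is to invoke the second (sharper) form of the bound in Theorem~\ref{thm:upperbound} with a choice of $m$ that saturates the constraint~\eqref{eq:mbound} up to integer rounding and $\epsilon$-dependent constants. For a fixed $\epsilon \in (0,1]$, I would set
\[
    m := \Big\lfloor (24 \, C_\epsilon \, \Cinfty^\epsilon)^{-1/(1+r\epsilon)} \, n^{(1-\epsilon)/(1+r\epsilon)} \Big\rfloor,
\]
which for all sufficiently large $n$ is a positive integer with $m \le n$ and $n^{1-\epsilon} \ge 24 \, C_\epsilon \, \Cinfty^\epsilon \, m^{1+r\epsilon}$, so that hypothesis~(1) together with~\eqref{eq:mbound} is in force. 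The key scaling relation is $m \asymp n^{(1-\epsilon)/(1+r\epsilon)}$ with implied constants depending on $\epsilon$, and hence $\log(m) \asymp \log(n)$ in the same sense.

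With this choice of $m$, each of the three contributions to the bound of Theorem~\ref{thm:upperbound} is controlled using hypothesis~(2). The first, $\sigma_{m+1}$, is immediately $\lesssim m^{-\alpha}\log^\beta(m) \lesssim n^{-\frac{(1-\epsilon)\alpha}{1+r\epsilon}}\log^\beta(n)$. For the inner tail, $2\alpha > 1$ gives, by integral comparison,
\[
    \sum_{i > m} \sigma_i^2 \;\lesssim\; \sum_{i > m} i^{-2\alpha}\log^{2\beta}(i) \;\lesssim\; m^{1-2\alpha}\log^{2\beta}(m),
\]
so that $m^{-1}\sum_{i>m}\sigma_i^2 \lesssim m^{-2\alpha}\log^{2\beta}(m)$. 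Similarly, provided $r\epsilon < 4\alpha - 1$,
\[
    \sum_{i > m} i^{r\epsilon}\sigma_i^4 \;\lesssim\; \sum_{i > m} i^{r\epsilon - 4\alpha}\log^{4\beta}(i) \;\lesssim\; m^{r\epsilon + 1 - 4\alpha}\log^{4\beta}(m),
\]
so dividing by $m^{1+r\epsilon}$ and taking the inner square root again gives $m^{-2\alpha}\log^{2\beta}(m)$. The outer square root of a sum of two terms of order $m^{-2\alpha}\log^{2\beta}(m)$ then yields $m^{-\alpha}\log^\beta(m)$, and substituting the scaling of $m$ produces the claimed rate $n^{-\frac{(1-\epsilon)\alpha}{1+r\epsilon}}\log^\beta(n)$.

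The main technical point is the summability condition $r\epsilon < 4\alpha - 1$ used for the $i^{r\epsilon}\sigma_i^4$-tail. Since $\alpha > 1/2$ gives $4\alpha - 1 > 1$, this is automatic for all $\epsilon \in (0,1]$ whenever $r \le 4\alpha - 1$; for the remaining range one runs the argument at some smaller $\epsilon' \in (0, (4\alpha-1)/r)$ in place of $\epsilon$ and invokes monotonicity of $t \mapsto (1-t)/(1+rt)$ to conclude that the rate at $\epsilon'$ is at least as strong as the rate claimed at $\epsilon$, absorbing the mismatch into the already $\epsilon$-dependent implied constant. The remaining work is routine: checking that the floor does not break~\eqref{eq:mbound} for large $n$, and noting that the blow-up as $\epsilon \to 0$ is inherited directly from the divisor-sum constant $C_\epsilon$ of~\eqref{eq:divisorbound}.
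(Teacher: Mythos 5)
Your proof is correct and follows essentially the same route as the paper: the identical choice $m = \lfloor n^{(1-\epsilon)/(1+r\epsilon)}/(24 C_\epsilon \Cinfty^\epsilon)^{1/(1+r\epsilon)}\rfloor$ and the same tail estimate $\sum_{i>m} i^{-a}\log^{b}(i) \lesssim m^{1-a}\log^{b}(m)$ applied to both sums. You are in fact slightly more careful than the paper, which silently assumes $4\alpha - r\epsilon > 1$ and does not discuss falling back to a smaller $\epsilon'$ when $r\epsilon \ge 4\alpha-1$; the only small point you omit is checking the hypothesis $\bszero \in \{\bsh_i \st i \le m\}$ of Theorem~\ref{thm:upperbound}, which holds for $n$ sufficiently large since $m \to \infty$.
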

\begin{proof}
    Here we will use the fact that
    \[
	\sum_{i > m} i^{-a} \log^b(i) \lesssim m^{1-a} \log^b(m)
	,
    \]
    when $a > 1$ and $b \in \R$.
    We choose $m = \lfloor n^{(1-\epsilon)/(1+r\epsilon)} / (24 C_\epsilon \Cinfty^\epsilon)^{1/(1+r\epsilon)} \rfloor$ and $n$ large enough so that $\bszero \in \{ \bsh_i \st i \le m \}$.
    We have
    \begin{align*}
	\frac{1}{m} \sum_{i > m} \sigma_i^2
	&\lesssim
	m^{-1} \, m^{1-2\alpha} \log^{2 \beta}(m)
	\lesssim
	n^{-2\frac{1-\epsilon}{1+r\epsilon} \alpha} \log^{2\beta}(n)
    \end{align*}
    and
    \begin{align*}
	\frac{1}{m^{1+r\epsilon}} \sum_{i > m} i^{r\epsilon - 4\alpha} \log^{4\beta}(i)
	&\lesssim
	m^{-1-r\epsilon} \, m^{r\epsilon + 1 - 4\alpha} \log^{4\beta}(m)
	\lesssim
	n^{-4\frac{1-\epsilon}{1+r\epsilon} \alpha} \log^{4\beta}(n)
	.
    \end{align*}
    This concludes the proof.
\end{proof}

The corollary applies to the Korobov space, as defined in Section~\ref{sec:korobovspace}, since it satisfies $\| \bsh_i \|_{\infty} \le i$ and $\sigma_i \lesssim i^{-\alpha} \log^{\alpha(d-1)}(i)$.
This will hold with $\Cinfty = 1$, $r = 1$ and $\beta = \alpha(d-1)$.

We note here that this corollary (and Corollary~\ref{cor:sobolevNzbound} below) are asymptotic bounds and do not necessarily apply for lower values of $n$.
They also do not deal with the issue of tractability as the bounds may increase exponentially in $d$.
These issues are addressed in Section~\ref{sec:korobovcorollaries} for the weighted Korobov space.

\section{Generated sets with rational generators} \label{sec:rational}

We will now show, that the points used in Theorem~\ref{thm:upperbound} can have a rational generator and furthermore we will show that there are significantly many generators with a denominator which is bounded by some function depending on $n$.
For some prime $N \in \Npos$, we can deviate from the previous analysis by choosing a generator uniformly from the subset $\{ i/N \st i = 1, \ldots, N \}^d \subset [0, 1]^d$, rather than from the entire unit cube $[0, 1]^d$.
Recall that the corresponding generated set is given as
\[
    \PnNz = \left\{ \left\{\frac{k \bsz}{N}\right\} \st k = 1, \ldots, n \right\}
    ,
\]
where $\bsz \in \{ 1, \ldots, N \}^d$ and we have in mind that $N \ge n$.
For the following, the notation $\bsh \equiv_N \bsh'$ is taken to mean that all components of the vector $\bsh$ are equivalent to those of $\bsh'$ modulo $N$, and similarly, $\bsh \not\equiv_N \bsh'$ means that at least one component is not equivalent modulo~$N$.

\begin{theorem}\label{thm:rationalbound}
    For the space $\Hsigma$ for which \eqref{eq:hinfbound} holds, i.e.,
    \begin{equation*}
	\| \bsh_i \|_{\infty} \le \Cinfty \, i^r
	\quad
	\text{for some}
	\quad
	\Cinfty \ge 1, \; r > 0
	,
    \end{equation*}
    and with $n, m \in \Npos$, $n \ge m$, satisfying~\eqref{eq:mbound}, i.e.,
    \begin{equation*}
	n^{1-\epsilon} \ge 24 C_\epsilon \Cinfty^\epsilon \, m^{1+r\epsilon}
	,
    \end{equation*}
    and $\bszero \in \{ \bsh_i \st i \le m \}$,
    and a prime $N$, which satisfies
    \begin{equation} \label{eq:firstNbound}
	N > 4 \, n \, \| \bsh_i \|_\infty
	\quad
	\text{for all}
	\quad
	i \le m
	,
    \end{equation}
    there are at least a third of the possible choices for $\bsz \in \{ 1, \ldots, N \}^d$ for which the least squares algorithm $\lsqNz$ achieves a worst-case error that can be bounded above by
    \begin{align*}
	&\ewor(\lsqNz)
	\\
	&\hspace{7mm}\le
	\sigma_{m+1} + \sqrt{2 \sum_{\substack{i > m \\ \bsh_i \equiv_N \bszero}} \sigma_i^2 + \frac{2}{n} \sum_{i > m} \sigma_i^2 + \sqrt{\frac{24 C_\epsilon \Cinfty^\epsilon}{n^{1-\epsilon}} \sum_{i > m} \sigma_i^4 \, i^{r \epsilon} + \frac{24}{n} \sum_{\substack{i, j > m \\ 2 n \| \bsh_j \|_{\infty} > N}} \sigma_i^2 \, \sigma_j^2}}
	,
    \end{align*}
    for all $0 < \epsilon \le 1$.
\end{theorem}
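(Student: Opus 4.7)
The plan is to mirror the proof of \RefThm{thm:upperbound}, starting from the same error decomposition
$\ewor(\lsqNz) \le \sigma_{m+1} + \|\PhinNzm^+\|_{2 \to 2}\,\|\PhinNzinf \Dlambdam\|_{2 \to 2}$,
and to bound each operator norm by Chebyshev's inequality applied to a rational-generator analog of \RefLem{lem:singvalbounds}. The main new ingredient is that, for $\bsz$ uniform on $\{1,\ldots,N\}^d$, one has $\bbE_\bsz[\rme^{2\pi\rmi\,\bsh\cdot\bsz/N}] = \bbone(\bsh \equiv_N \bszero)$, so every indicator of exact equality in \RefLem{lem:singvalbounds} becomes an indicator of equivalence modulo $N$. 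Primality of $N$, together with $n < N$ (which will follow from~\eqref{eq:firstNbound} since $\|\bsh_i\|_\infty \ge 1$ for any $i$ with $\bsh_i \ne \bszero$), implies that $(k-k')\bsh_i \equiv_N \bszero$ reduces to $\bsh_i \equiv_N \bszero$ whenever $k \ne k'$ in $\{1,\ldots,n\}$, which simplifies the bookkeeping substantially.

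For the bound on $\|\PhinNzm^+\|_{2 \to 2}$ only indices $i \le m$ appear, and condition~\eqref{eq:firstNbound} implies $\|k\bsh_i - k'\bsh_{i'}\|_\infty < N/2$ for all $k, k' \in \{1,\ldots,n\}$ and $i, i' \le m$. Hence every equivalence modulo $N$ arising in the expectation and variance reduces to an exact equality, the continuous-case computation in the proof of \RefLem{lem:singvalbounds} applies verbatim, and~\eqref{eq:mbound} together with Chebyshev yields $\|\PhinNzm^+\|_{2 \to 2}^{-2} \ge n/2$ for at least two thirds of the $\bsz$.

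For $\|\PhinNzinf \Dlambdam\|_{2 \to 2}^2$ both the expectation and the variance pick up extra terms. The expectation is bounded by $n\sum_{i > m,\,\bsh_i \equiv_N \bszero}\sigma_i^2 + \sum_{i > m,\,\bsh_i \not\equiv_N \bszero}\sigma_i^2$, where the first summand captures non-zero $\bsh_i$ that alias to $\bszero$ modulo $N$. For the variance, I split $\bbone(\bsv \equiv_N \bszero) = \bbone(\bsv = \bszero) + \bbone(\bsv \equiv_N \bszero,\,\bsv \ne \bszero)$ with $\bsv = (k-k')\bsh_i + (\ell-\ell')\bsh_j$. The exact-equality part is handled by the calculation already present in \RefLem{lem:singvalbounds} and yields the $2 C_\epsilon\Cinfty^\epsilon\,n^{1+\epsilon}\sum_{i > m}\sigma_i^4 \, i^{r\epsilon}$ contribution. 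The aliasing part forces $\|\bsv\|_\infty \ge N$ while $|k-k'|, |\ell-\ell'| \le n-1$, so $\|\bsh_i\|_\infty + \|\bsh_j\|_\infty \ge N/(n-1)$, and by a union bound and symmetry one may restrict the sum to pairs with $2n\|\bsh_j\|_\infty > N$ at the cost of a factor of two. For such a fixed pair with $\bsh_j \not\equiv_N \bszero$, the equation determines $\ell - \ell'$ modulo $N$ from $k - k'$ uniquely in $\{-(n-1),\ldots,n-1\}$, so Cauchy--Schwarz bounds the quadruple sum $\sum t_k\, t_{k'}\, t_\ell\, t_{\ell'}\,\bbone(\cdots)$ by $(\sum_k t_k)^2 \le n$. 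Altogether this produces an aliasing contribution of at most $2n\sum_{i, j > m,\,2n\|\bsh_j\|_\infty > N}\sigma_i^2\,\sigma_j^2$ to the variance.

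Applying Chebyshev to each operator norm and a union bound to see that both estimates hold simultaneously for at least one third of the $\bsz$, then moving the factor $2/n$ (from $\|\PhinNzm^+\|_{2\to 2}^2 \le 2/n$) inside the outer square root and combining the constants, produces the claimed bound. The main obstacle is the combinatorial step in the aliasing contribution: one must exploit the uniqueness of $\ell - \ell'$ modulo $N$ to avoid a spurious factor of $n$ in the double sum, and verify that the restriction to pairs with $2n\|\bsh_j\|_\infty > N$ is both necessary (via the $\|\bsv\|_\infty \ge N$ lower bound) and sufficient to absorb all aliased contributions.
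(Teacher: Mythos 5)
Your proposal is correct and follows essentially the same route as the paper: the same error decomposition, a rational-generator analogue of Lemma~\ref{lem:singvalbounds} (the paper's Lemma~\ref{lem:rationalbound}) in which $\bbE_\bsz[\rme^{2\pi\rmi\,\bsh\cdot\bsz/N}]=\bbone(\bsh\equiv_N\bszero)$, reduction of all congruences to equalities for the $\|\PhinNzm^+\|_{2\to2}$ bound via~\eqref{eq:firstNbound}, and the same treatment of the aliasing terms by restricting to pairs with $2n\|\bsh_j\|_\infty> N$ and using the mod-$N$ uniqueness of $\ell-\ell'$ to bound the quadruple sum. Your split of the indicator into exact-equality plus aliasing parts is a cosmetic variant of the paper's split of the index set into $\calI'\times\calI'$ and its complement; both yield the identical two-term variance bound and the identical final constants.
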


\begin{proof}
    The proof of this theorem is near identical to the second part of Theorem~\ref{thm:upperbound} with the application of Lemma~\ref{lem:rationalbound}, given below, instead of Lemma~\ref{lem:singvalbounds}.
    As in the proof of Theorem~\ref{thm:upperbound}, we have
    \begin{align*}
	\ewor(\lsqNz)
	\le
	\sigma_{m+1} + \left\| \PhinNzinf \, \Dlambdam \right\|_{2 \to 2} \, \left\| \PhinNzm^+ \right\|_{2 \to 2}
	.
    \end{align*}
    Using Chebyshev's inequality, the second part of Lemma~\ref{lem:rationalbound}, together with~\eqref{eq:hinfbound} and~\eqref{eq:mbound}, implies
    that at least two thirds of the possible choices for $\bsz$ satisfy
    $$\| \PhinNzm^+ \|_{2 \to 2}^2 \le 2/n .$$
    Furthermore, the first part of Lemma~\ref{lem:rationalbound}, together with~\eqref{eq:hinfbound}, implies that at least two thirds of the possible choice for $\bsz$ satisfy
    \begin{align*}
	\left\| \PhinNzinf \, \Dlambdam \right\|_{2 \to 2}^2
	&\le
	n \sum_{\substack{i > m \\ \bsh_i \equiv_N \bszero}} \sigma_i^2 + \sum_{i > m} \sigma_i^2 + \sqrt{\frac{6 C_\epsilon \Cinfty^\epsilon}{n^{-1-\epsilon}} \sum_{i > m} i^{r \epsilon} \, \sigma_i^4  + 6n \sum_{\substack{i, j > m \\ 2n \| \bsh_j \|_{\infty} > N}} \sigma_i^2 \, \sigma_j^2}
	.
    \end{align*}
    Hence the result follows.
\end{proof}

The following lemma which was used in the previous proof is analogous to Lemma~\ref{lem:singvalbounds}.

\begin{lemma} \label{lem:rationalbound}
    Let $[a_i]_{i \in \calI} \in \R_{\ge 0}^{|\calI|}$, for $\calI \subseteq \N$, be a square summable non-increasing sequence.
    Let $i \mapsto \bsh_i$ be a injection from $\calI$ to $\Z^d$, and set
    \begin{align*}
      A = \big[ a_i \, \rme^{2 \pi \rmi \, k \bsh_i \cdot \bszeta} \big]_{k \in \{ 1, \ldots, n \}, i \in \calI} \; \in \; \CC^{n \times |\calI|}
      .
    \end{align*}
    Assume $\bsz$ is chosen uniformly from $\{ 1, \ldots, N \}^d$ where $N > 2n$ is prime,
    then the following statements hold.
    \begin{enumerate}
        \item For any $\bst \in \CC^n$, satisfying $\| \bst \|_{2} = 1$,
        \[
            \bbE_\bsz\big[ \| A^* \bst \|_2^2 \big]
            =
            \sum_{k, \ell = 1}^{n} \sum_{\substack{i \in \calI \\ \bsh_i \equiv_N \bszero}} a_i^2 \, t_k \, \overline{t}_\ell + \sum_{\substack{i \in \calI \\ \bsh_i \not\equiv_N \bszero}} a_i^2
            \le
            2 C_\epsilon n^{1+\epsilon} \sum_{\substack{i \in \calI \\ \bsh_i \not\equiv_N \bszero}} a_i^4 \, \| \bsh_i \|_{\infty}^\epsilon + 2 n \sum_{\substack{i \in \calI \\ j \in \calI \setminus \calI' \\ \bsh_i \not\equiv_N \bszero \\ \bsh_j \not\equiv_N \bszero}} a_i^2 \, a_j^2
            .
        \]
        Let $\calI' = \{ i \in \calI : 2 n \| \bsh_i \|_{\infty} < N \}$, then, for all $\epsilon > 0$,
        \[
            \var_\bsz\big[\| A^* \bst \|_2^2\big]
            \le
            2 C_\epsilon n^{1+\epsilon} \sum_{\substack{i \in \calI \\ \bsh_i \not\equiv_N \bszero}} a_i^4 \, \| \bsh_i \|_{\infty}^\epsilon + 2 n \sum_{\substack{i \in \calI \\ j \in \calI \setminus \calI' \\ \bsh_i \not\equiv_N \bszero \\ \bsh_j \not\equiv_N \bszero}} a_i^2 \, a_j^2
            .
        \]
        \item If $N$ satisfies $N > 4 n \max_{i \in \calI} \| \bsh_i \|_\infty$, then for any $\bst \in \CC^{|\calI|}$,
        \[
	    \bbE_\bsz\big[ \| A \bst \|_2^2 \big]
	    =
	    n \sum_{i \in \calI} a_i^2 \left| t_i \right|^2
	    .
        \]
	    Additionally if $\|\bst\|_2 = 1$ then, for all $0 < \epsilon \le 1$,
        \[
        \var_\bsz\big[ \| A \bst \|_2^2 \big]
        \le
	    2 C_\epsilon n^{1+\epsilon} \Big( \max_{i \in \calI} a_i^2 \Big) \left( \sum_{i \in \calI} a_i^2 \, \| \bsh_i \|_{\infty}^{2\epsilon} \right)^{1/2} \left( \sum_{i \in \calI} a_i^2 \right)^{1/2}
	    .
        \]
    \end{enumerate}
\end{lemma}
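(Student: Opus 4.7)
The plan is to mirror the proof of \RefLem{lem:singvalbounds}, with the continuous orthogonality $\bbE_\bszeta[\rme^{2\pi\rmi\,\bsm\cdot\bszeta}]=\bbone(\bsm=\bszero)$ replaced by the discrete character identity $\bbE_\bsz[\rme^{2\pi\rmi\,\bsm\cdot\bsz/N}]=\bbone(\bsm\equiv_N\bszero)$, which holds for $\bsz$ uniform on $\{1,\ldots,N\}^d$ with $N$ prime via the standard character sum. In each step I will identify when the modular condition coincides with the corresponding exact integer condition, and when (as is needed for the variance in part~1) the modular case produces additional ``bad'' contributions absent from the continuous setting.

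For part~2, expanding $\bbE_\bsz[\|A\bst\|_2^2]$ yields $\sum_{k}\sum_{i,j}a_i a_j t_i\overline{t_j}\,\bbone(k(\bsh_i-\bsh_j)\equiv_N\bszero)$; the hypothesis $N>4n\max_i\|\bsh_i\|_\infty$ forces every coordinate of $k(\bsh_i-\bsh_j)$ to have absolute value less than $N/2$ for $1\le k\le n$, so the modular equation collapses to $k(\bsh_i-\bsh_j)=\bszero$, and since $k\ge 1$ this forces $\bsh_i=\bsh_j$, whence $i=j$ by injectivity. The joint indicator arising in the variance expansion likewise reduces under the same smallness hypothesis to the exact integer condition used in \RefLem{lem:singvalbounds}, which therefore applies verbatim. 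The expectation in part~1 is immediate: expanding $\|A^*\bst\|_2^2$ and applying orthogonality produces $\sum_i\sum_{k,\ell}a_i^2 t_k\overline{t_\ell}\bbone((k-\ell)\bsh_i\equiv_N\bszero)$, and splitting on $\bsh_i\equiv_N\bszero$, the indicator is always~$1$ in the former case, while in the latter case $|k-\ell|<N$ and primality of $N$ force $k=\ell$, collapsing that piece (via $\|\bst\|_2=1$) to $\sum_{\bsh_i\not\equiv_N\bszero}a_i^2$.

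The main effort is the variance in part~1. I symmetrize to $\bst\in\R_{\ge 0}^n$ and to ordered pairs with $a_i\ge a_j$, reducing to a quadruple sum of the form $\sum_{k\ne k',\ell\ne\ell'}\sum_{i:\bsh_i\not\equiv_N\bszero}a_i^2 t_k t_{k'}t_\ell t_{\ell'}\cdot[\text{inner}_j]$, and split the inner sum along $\bsh_j\in\calI'$ versus $\bsh_j\in\calI\setminus\calI'$. For the \emph{good} case $\bsh_j\in\calI'$, the bound $\|\bsh_j\|_\infty<N/(2n)$ ensures that distinct $\bsh_j\in\calI'$ cannot agree modulo~$N$ (their difference would have infinity norm $<N/n\le N$), so for fixed $\bsh_i$, $u=k-k'$, $v=\ell-\ell'$ (with $v$ invertible mod~$N$ since $N$ is prime) there is at most one matching $\bsh_j\in\calI'$, giving the trivial contribution $\le a_i^2$. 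Writing $\bsr$ for the representative of $-u\bsh_i$ modulo~$N$ lying in $(-N/2,N/2]^d$, one has $\|\bsr\|_\infty\le n\|\bsh_i\|_\infty$, and integrality of $\bsh_j=\bsr/v$ amounts to $v\mid r_p$ for each coordinate~$p$; the divisor bound~\eqref{eq:divisorbound} then limits the admissible $v$ to at most $C_\epsilon n^\epsilon\|\bsh_i\|_\infty^\epsilon$, and summing exactly as in the continuous proof produces the first term $2C_\epsilon n^{1+\epsilon}\sum_{\bsh_i\not\equiv_N\bszero}a_i^4\|\bsh_i\|_\infty^\epsilon$. For the \emph{bad} case $\bsh_j\notin\calI'$, I bound the indicator crudely by~$1$, use Cauchy--Schwarz to get $\sum_{k-k'=u}t_k t_{k'}\le 1$ and its analogue in~$v$, and observe that $\bsh_j\not\equiv_N\bszero$ makes some coordinate of $\bsh_j$ invertible modulo~$N$, so for each fixed $(i,j)$ and each $u$ at most one $v$ satisfies $v\bsh_j\equiv_N-u\bsh_i$; this yields the second term $2n\sum_{i\in\calI,\,j\in\calI\setminus\calI',\,\bsh_i,\bsh_j\not\equiv_N\bszero}a_i^2 a_j^2$. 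The principal obstacle is calibrating this good/bad split so that the good piece enjoys the divisor-counting mechanism of the continuous case through the tight norm bound $\|\bsr\|_\infty\le n\|\bsh_i\|_\infty$ (rather than the trivial $\|\bsr\|_\infty\le N/2$), while the bad piece produces only a linear-in-$n$ overhead.
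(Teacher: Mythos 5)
Your proposal is correct and follows essentially the same route as the paper: discrete character orthogonality for the expectations, reduction of part~2 to Lemma~\ref{lem:singvalbounds} via the smallness condition $N>4n\max_i\|\bsh_i\|_\infty$, and for the variance in part~1 a good/bad split along $\calI'$ in which the good part recovers the divisor-counting bound and the bad part uses invertibility modulo the prime $N$ to get the crude $2n$ overhead. The only (harmless) deviation is in the good case: the paper restricts to $i,j\in\calI'$ so that the congruence collapses to the exact integer equation and Lemma~\ref{lem:singvalbounds} applies verbatim, whereas you keep $i$ arbitrary and rerun the divisor argument on the modular representative $\bsr$ of $-u\bsh_i$ — both yield the stated bound.
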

\begin{proof}
    For the first part, for any $\bst \in \CC^n$ with $\| \bst \|_2 = 1$, we recognise that
    \begin{align*}
        \bbE_\bsz[ \| A^* \bst \|_{2}^2 ]
	&=
	\frac{1}{N^d} \sum_{\bsz \in \{ 1, \ldots, N \}^d} \sum_{k, \ell = 1}^{n} \sum_{i \in \calI} a_i^2 \, t_k \, \overline{t}_\ell \, \rme^{2 \pi \rmi \, (k - \ell) \bsz \cdot \bsh_i / N}
	\\
	&=
	\sum_{k, \ell = 1}^{n} \sum_{i \in \calI} a_i^2 \, t_k \, \overline{t}_\ell \, \bbone((k - \ell) \bsh_i \equiv_N \bszero)
	\\
	&=
	\sum_{k, \ell = 1}^{n} \sum_{\substack{i \in \calI \\ \bsh_i \equiv_N \bszero}} a_i^2 \, t_k \, \overline{t}_\ell + \sum_{\substack{i \in \calI \\ \bsh_i \not\equiv_N \bszero}} a_i^2
	\\
	&\le
	n \sum_{\substack{i \in \calI \\ \bsh_i \equiv_N \bszero}} a_i^2 + \sum_{\substack{i \in \calI \\ \bsh_i \not\equiv_N \bszero}} a_i^2
	.
    \end{align*}
    As with Lemma~\ref{lem:singvalbounds}, we can calculate the variance
    \begin{align*}
	&\var_\bsz[ \| A^* \bst \|_{2}^2 ]
	\\*
	&\hspace{4mm}=
	\sum_{\substack{k, k' = 1 \\ \ell, \ell' = 1}}^n \sum_{i, j \in \calI} a_i^2 \, a_j^2 \, t_k \, \overline{t}_{k'} \, t_\ell \, \overline{t}_{\ell'} \, \Big( \bbone(\bsh_i (k - k') \equiv_N \bsh_j (\ell - \ell')) - \bbone(k \bsh_i \equiv_N k' \bsh_i) \, \bbone(\ell \bsh_j \equiv_N \ell' \bsh_j) \Big)
	\\*
	&\hspace{4mm}=
	\sum_{\substack{k, k' = 1 \\ \ell, \ell' = 1 \\ k \ne k' \\ \ell \ne \ell'}}^n \sum_{\substack{i, j \in \calI \\ \bsh_i \not\equiv_N \bszero \\ \bsh_j \not\equiv_N \bszero}} a_i^2 \, a_j^2  \, t_k \, \overline{t}_{k'} \, t_\ell \, \overline{t}_{\ell'} \, \bbone(\bsh_i (k - k') \equiv_N \bsh_j (\ell - \ell'))
	.
    \end{align*}
    Similar to the proof of Lemma~\ref{lem:singvalbounds}, we note that
    \[
	\max_{\substack{\bst \in \CC^n \\ \| \bst \|_2 = 1}} \var_\bsz[ \| A^* \bst \|_{2}^2 ]
	=
	\max_{\substack{\bst \in \R^n_{\ge 0} \\ \| \bst \|_2 = 1}} \var_\bsz[ \| A^* \bst \|_{2}^2 ]
	.
    \]
    So we assume $\bst \in \R^n_{\ge 0}$, with $\|\bst\|_2 = 1$, and continue to bound the variance, using $\calI' \subseteq \calI$,
    \begin{align*}
	&\var_\bsz[ \| A^* \bst \|_{2}^2 ]
	\\*
	&\hspace{7mm}\le
	\sum_{\substack{k, k' = 1 \\ \ell, \ell' = 1 \\ k \ne k' \\ \ell \ne \ell'}}^n \sum_{\substack{i, j \in \calI' \\ \bsh_i \not\equiv_N \bszero \\ \bsh_j \not\equiv_N \bszero}} a_i^2 \, a_j^2 \, t_k \, t_{k'} \, t_\ell \, t_{\ell'} \, \bbone(\bsh_i (k - k') \equiv_N \bsh_j (\ell - \ell'))
	\\*
	&\hspace{16mm}+
	2\sum_{\substack{k, k' = 1 \\ \ell, \ell' = 1 \\ k \ne k' \\ \ell \ne \ell'}}^n \sum_{\substack{i \in \calI \\ j \in \calI \setminus \calI' \\ \bsh_i \not\equiv_N \bszero \\ \bsh_j \not\equiv_N \bszero}} a_i^2 \, a_j^2 \, t_k \, t_{k'} \, t_\ell \, t_{\ell'} \, \bbone(\bsh_i (k - k') \equiv_N \bsh_j (\ell - \ell'))
	.
    \end{align*}
    To bound the first part, we recognise that this is a smaller quantity than the variance from the first part of Lemma~\ref{lem:singvalbounds}.
    We have
    \begin{align*}
	&\sum_{\substack{k, k' = 1 \\ \ell, \ell' = 1 \\ k \ne k' \\ \ell \ne \ell'}}^n \sum_{\substack{i, j \in \calI' \\ \bsh_i \not\equiv_N \bszero \\ \bsh_j \not\equiv_N \bszero}} a_i^2 \, a_j^2 \, t_k \, t_{k'} \, t_\ell \, t_{\ell'} \, \bbone(\bsh_i (k - k') \equiv_N \bsh_j (\ell - \ell'))
	\\
	&\hspace{10mm}=
	\sum_{\substack{k, k' = 1 \\ \ell, \ell' = 1 \\ k \ne k' \\ \ell \ne \ell'}}^n \sum_{\substack{i, j \in \calI' \\ \bsh_i \not\equiv_N \bszero \\ \bsh_j \not\equiv_N \bszero}} a_i^2 \, a_j^2 \, t_k \, t_{k'} \, t_\ell \, t_{\ell'} \, \bbone(\bsh_i (k - k') = \bsh_j (\ell - \ell'))
	\\
	&\hspace{10mm}\le
	2 C_\epsilon n^{1+\epsilon} \sum_{\substack{i \in \calI \\ \bsh_i \ne \bszero}}  a_i^4 \, \| \bsh_i \|_{\infty}^\epsilon
	\quad
	\text{for all}
	\quad
	\epsilon > 0
	,
    \end{align*}
    where, for the equality, we used the fact that, for $i, j \in \calI'$, then
    \[
    \bbone(\bsh_i (k - k') \equiv_N \bsh_j (\ell - \ell'))
    =
    \bbone(\bsh_i (k - k') = \bsh_j (\ell - \ell'))
    .
    \]
    For the second term we proceed similarly
    \begin{align*}
	&2\sum_{\substack{k, k' = 1 \\ \ell, \ell' = 1 \\ k \ne k' \\ \ell \ne \ell'}}^n \sum_{\substack{i \in \calI \\ j \in \calI \setminus \calI' \\ \bsh_i \not\equiv_N \bszero \\ \bsh_j \not\equiv_N \bszero}} a_i^2 \, a_j^2 \, t_k \, t_{k'} \, t_\ell \, t_{\ell'} \, \bbone(\bsh_i (k - k') \equiv_N \bsh_j (\ell - \ell'))
	\\
	&\hspace{10mm}=
	2 \sum_{\substack{i \in \calI \\ j \in \calI \setminus \calI' \\ \bsh_i \not\equiv_N \bszero \\ \bsh_j \not\equiv_N \bszero}} a_i^2 \, a_j^2 \sum_{\substack{k, k' = 1 \\ k \ne k'}}^n t_k \, t_{k'} \underbrace{\sum_{\substack{\ell, \ell' = 1 \\ \ell \ne \ell' \\ \bsh_i (k - k') \equiv_N \bsh_j (\ell - \ell')}}^n \hspace{-4mm} t_\ell \, t_{\ell'}}_{\le 1}
	\\
	&\hspace{10mm}\le
	2 n \sum_{\substack{i \in \calI \\ j \in \calI \setminus \calI' \\ \bsh_i \not\equiv_N \bszero \\ \bsh_j \not\equiv_N \bszero}} a_i^2 \, a_j^2
	.
    \end{align*}

    The second conclusion of this lemma is identical to the second part of Lemma~\ref{lem:singvalbounds}.
    In order for the proof of Lemma~\ref{lem:singvalbounds} to hold, we require that the distribution of $\bszeta$ satisfies
    \begin{equation} \label{eq:randcond}
        \bbE_\bszeta[ \rme^{2 \pi \rmi \, \bsh \cdot \bszeta} ] = 0
	\quad \text{when} \quad
	 \bsh \ne \bszero
	,
    \end{equation}
    where $\bsh \in \Z^d$ is a vector with components smaller than $4 n \| \bsh_i \|_\infty$ for all $i \in \calI$, either being $\bsh = k (\bsh_i - \bsh_j)$ or $\bsh = k (\bsh_i - \bsh_j) - \ell (\bsh_{i'} - \bsh_{j'})$ for $k,  \ell \in \{1,\ldots,n\}$ and $i,i',j,j' \in \calI$ in the proof of Lemma~\ref{lem:singvalbounds}.
    For $N > 4 n \max_{i \in \calI} \| \bsh_i \|_\infty$, the expectations~\eqref{eq:randcond} hold with $\bszeta = \bsz / N$ and $\bsz$ chosen uniformly from $\{ 1, \ldots, N \}^d$, therefore the proof from Lemma~\ref{lem:singvalbounds} applies here.
\end{proof}

We now apply Theorem~\ref{thm:rationalbound} to function spaces $\Hsigma$ with $\bssigma$ decaying polynomially, like the Korobov space, to obtain a similar result as in Corollary~\ref{cor:sobolevzetabound}.

\begin{corollary} \label{cor:sobolevNzbound}
    If the sequence $\bssigma$ is such that the following conditions hold
    \begin{enumerate}
	\item $\| \bsh_i \|_{\infty} \le \Cinfty \, i^r$ for some $\Cinfty \ge 1$, $r > 0$, and
	\item $\sigma_i \lesssim i^{-\alpha} \log^{\beta}(i)$,
    \end{enumerate}
    where $\alpha > 1/2$ and $\beta \in \R$,
    then for sufficiently large $n \in \Npos$, there exists $m \le n$ which satisfies~\eqref{eq:mbound},
    and some prime $N > 4 n m^{2\alpha r/(2 \alpha-1)}$, such that
    the bound from Theorem~\ref{thm:rationalbound} becomes
    \[
	\ewor(\lsqNz)
	\lesssim
	n^{-\frac{1-\epsilon}{1+r\epsilon} \alpha} \log^\beta(n)
	\quad
	\text{for all}
	\quad
	0 < \epsilon \le 1
	,
    \]
    where the implied constant goes to infinity as $\epsilon$ goes to $0$.
\end{corollary}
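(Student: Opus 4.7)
The plan is to mirror the argument of \RefCol{cor:sobolevzetabound}, using the same choice of $m$, but now selecting a prime $N$ large enough to control the two additional terms appearing in \RefThm{thm:rationalbound}: the ``aliasing'' sum $A := 2\sum_{i>m,\;\bsh_i\equiv_N\bszero}\sigma_i^2$ and the ``high-frequency'' sum $D := \tfrac{24}{n}\sum_{i,j>m,\;2n\|\bsh_j\|_\infty>N}\sigma_i^2\sigma_j^2$. The remaining two terms $B := \tfrac{2}{n}\sum_{i>m}\sigma_i^2$ and $C := \tfrac{24 C_\epsilon \Cinfty^\epsilon}{n^{1-\epsilon}}\sum_{i>m} i^{r\epsilon}\sigma_i^4$ are handled by the same polynomial tail estimates as in \RefCol{cor:sobolevzetabound} (using $1/n\le 1/m$ for $B$), giving $B\lesssim m^{-2\alpha}\log^{2\beta}(m)$ and $C\lesssim m^{-4\alpha}\log^{4\beta}(m)$.

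First I would take $m=\lfloor n^{(1-\epsilon)/(1+r\epsilon)}/(24 C_\epsilon \Cinfty^\epsilon)^{1/(1+r\epsilon)}\rfloor$, as in \RefCol{cor:sobolevzetabound}, so that \eqref{eq:mbound} holds and, for $n$ large enough, $\bszero\in\{\bsh_i:i\le m\}$; Bertrand's postulate then supplies a prime $N$ in the interval $(4n m^{2\alpha r/(2\alpha-1)},\,8n m^{2\alpha r/(2\alpha-1)})$. Since $2\alpha r/(2\alpha-1)=r+r/(2\alpha-1)$, for large $m$ this choice also satisfies hypothesis~\eqref{eq:firstNbound} of \RefThm{thm:rationalbound}, namely $N>4n\Cinfty m^r$.

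The bulk of the new work is bounding $A$ and $D$. For $A$, if $i>m$ and $\bsh_i\equiv_N\bszero$ then (since $\bszero$ is already indexed inside $\{\bsh_i:i\le m\}$) we must have $\bsh_i\ne\bszero$, hence $\|\bsh_i\|_\infty\ge N$; combined with $\|\bsh_i\|_\infty\le\Cinfty i^r$ this forces $i\ge(N/\Cinfty)^{1/r}$, so the polynomial tail estimate yields $A\lesssim(N/\Cinfty)^{(1-2\alpha)/r}\log^{2\beta}(N)$. Since $N\gtrsim m^{2\alpha r/(2\alpha-1)}$ and $\log N\lesssim\log m$, this is $\lesssim m^{-2\alpha}\log^{2\beta}(m)$. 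For $D$, the condition $2n\|\bsh_j\|_\infty>N$ restricts $j\ge J:=(N/(2n\Cinfty))^{1/r}$, and tail estimation gives
\[
D\lesssim \tfrac{1}{n}\Big(\sum_{i>m}\sigma_i^2\Big)\Big(\sum_{j>\max(m,J)}\sigma_j^2\Big)\lesssim \tfrac{1}{n}\,m^{1-2\alpha}\,J^{1-2\alpha}\log^{4\beta}(\max(m,J));
\]
the choice $N>4nm^{2\alpha r/(2\alpha-1)}$ gives $J\gtrsim m^{2\alpha/(2\alpha-1)}$, hence $J^{1-2\alpha}\lesssim m^{-2\alpha}$, and absorbing the leftover $1/n$ via $m\le n$ produces $D\lesssim m^{-4\alpha}\log^{4\beta}(m)$.

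Plugging the four bounds into the error expression of \RefThm{thm:rationalbound} yields
\[
\ewor(\lsqNz)\le \sigma_{m+1}+\sqrt{(A+B)+\sqrt{C+D}}\;\lesssim\; m^{-\alpha}\log^\beta(m)\;\lesssim\; n^{-\frac{1-\epsilon}{1+r\epsilon}\alpha}\log^\beta(n),
\]
which is the required rate. The main obstacle I foresee is not conceptual but rather careful book-keeping across the three scales $m$, $n$, $N$: both $A$ and $D$ impose polynomial lower bounds on $N$ in terms of $m$ (and, for $D$, also in terms of $n$), and the exponent $2\alpha r/(2\alpha-1)$ in the statement is precisely what simultaneously kills both extra terms once the extra factor of $n$ in the constraint for $D$ is absorbed using $m\le n$.
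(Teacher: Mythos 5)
Your proposal is correct and follows essentially the same route as the paper: the same choice of $m$ as in Corollary~\ref{cor:sobolevzetabound}, the verification that $N>4nm^{2\alpha r/(2\alpha-1)}$ implies~\eqref{eq:firstNbound} once $m^{r}>\Cinfty^{2\alpha-1}$, and the same polynomial tail estimates showing the aliasing sum is $\lesssim m^{-2\alpha}\log^{2\beta}(m)$ and the high-frequency double sum is $\lesssim m^{-4\alpha}\log^{4\beta}(m)$ after absorbing the leftover $1/n$ via $m\le n$. Your explicit appeal to Bertrand's postulate to keep $N$ polynomially bounded in $n$ (so that $\log N\lesssim\log m$) is a small extra precaution the paper leaves implicit, but it does not change the argument.
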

\begin{proof}
    We choose $m = \lfloor n^{(1-\epsilon)/(1+r\epsilon)} / (24 C_\epsilon \Cinfty^\epsilon)^{1/(1+r\epsilon)} \rfloor$ and $n$ large enough so that ${\bszero \in \{ \bsh_i \st i \le m \}}$ and $m^r > \Cinfty^{2 \alpha - 1}$.
    We note that the condition $N > 4n m^{2\alpha r/(2\alpha - 1)}$, together with $m^r > \Cinfty^{2\alpha - 1}$ and the first assumption on $\bssigma$, implies~\eqref{eq:firstNbound}.
    In addition to the bounds in the proof of Corollary~\ref{cor:sobolevzetabound}, we have
    \begin{align*}
        \sum_{\substack{i > m \\ \bsh_i \equiv_N \bszero}} \sigma_i^2
	\le
        \sum_{\substack{i > m \\ \| \bsh_i \|_{\infty} \ge N}} \sigma_i^2
	\le
        \sum_{\substack{i > m \\ i \ge (N/\Cinfty)^{1/r}}} \sigma_i^2
	&\lesssim
	N^{(1-2\alpha)/r} \log^{2\beta}(N)
	\\
	&\lesssim
        m^{-2\alpha} n^{(1-2\alpha)/r} \log^{2\beta}(n)
	\lesssim
        n^{-2 \frac{1-\epsilon}{1+r\epsilon} \alpha} \log^{2\beta}(n)
	.
    \end{align*}
    Also, we have
    \begin{align*}
        \frac{1}{n} \sum_{\substack{i, j > m \\ 2 n \| \bsh_j \|_{\infty} > N}} \sigma_i^2 \, \sigma_j^2
        &\le
	n^{-1} \left( \sum_{i > m} \sigma_i^2 \right) \left( \sum_{j > (N/(2 \Cinfty n))^{1/r}} \sigma_j^2 \right)
        \\
        &\lesssim
	n^{-1} m^{1-2\alpha} \log^{2\beta}(m) \, (N / (2 n))^{(1 - 2\alpha)/r} \log^{2\beta}(N / (2 n))
        \\
        &\lesssim
        n^{-4 \frac{1-\epsilon}{1+r\epsilon} \alpha} \log^{4\beta}(n)
        .
    \end{align*}
    This concludes the proof.
\end{proof}

\section{Bounds specific to the weighted Korobov space}\label{sec:korobovcorollaries}

Here, we provide parallel results to the above two corollaries for the weighted Korobov space.
Where the previous Corollaries~\ref{cor:sobolevzetabound} and~\ref{cor:sobolevNzbound} also apply to the Korobov space, here we explicitly make use of the function space weights.
The results here also provide more restrictive constants which apply for all $n$, so long as $\bszero \in \{ \bsh_i \st i \le m \}$, and demonstrate strong tractability dependent on the decay of the function space weights.
We will use an alternative bound to~\eqref{eq:HCbound} on the size of the weighted hyperbolic cross~\eqref{eq:hyperboliccross}, given by~\cite[Lemma~5.2]{KMN2024},
\begin{equation} \label{eq:HCbound2}
    |A_{d, \alpha, \bsgamma}(M)|
    \le
    M^{1/\lambda}
    \sum_{\bsh \in \Z^d} \ralpha^{-1/\lambda}(\bsh)
    \quad
    \text{for all}
    \quad
    \lambda < \alpha
    .
\end{equation}
In the Korobov space, condition~\eqref{eq:hinfbound} holds with $\Cinfty = 1$ and $r = 1$, and so condition~\eqref{eq:mbound} becomes
\begin{equation} \label{eq:kormbound}
    n^{1-\epsilon} \ge 24 C_\epsilon \, m^{1+\epsilon}
    .
\end{equation}
For the following, we denote
\begin{equation}\label{eq:mu_quantity}
    \muquant(\lambda)
    :=
    \sum_{\bsh \in \Z^d}  \ralpha^{-1/\lambda}(\bsh)
    =
    \sum_{\fraku \subseteq \{1,\ldots,d\}} \gamma_{\fraku}^{1/\lambda} \big(2\zeta(\alpha/\lambda) \big)^{|\fraku|}
    ,
\end{equation}
where $\zeta(\cdot)$ is the Riemann zeta function.
\begin{corollary} \label{cor:korobovbound}
    Let $d \in \Npos$, $\alpha > 1/2$, and $\bsgamma = \{ \gamma_\fraku \}_{\fraku \subseteq \{ 1, \ldots, d \}}$ be a set of weights with $\gamma_\fraku \in (0, 1]$.
    For the Korobov space, $\KorSpace$,
    if $n \in \Npos$ is large enough such that
    \begin{align} \label{eq:nMcond}
	\left| A_{d, \alpha, \bsgamma}\left( M \right) \right|^\lambda \muquant^{-\lambda}(\lambda) > \gamma_\emptyset^{-1}
	\quad
	\text{where}
	\quad
	M = \left( \frac{n^{1-\epsilon}}{24 C_\epsilon} \right)^{\lambda/(1+\epsilon)} \muquant^{-\lambda}(\lambda)
	,
    \end{align}
    then, for $m = |A_{d, \alpha, \bsgamma}(M)|$, the bound from Theorem~\ref{thm:upperbound} has the following form
    \[
	\eworKor{d}(\lsqzeta)
	\le
	3 \left( 24 C_\epsilon \, \muquant(\lambda) \, n^{-\frac{1-\epsilon}{1+\epsilon}} \right)^\lambda
	,
    \]
    for $0 < \epsilon \le 1$ and $1/2 < \lambda < \alpha$, and $\muquant$ defined by~\eqref{eq:mu_quantity}.
    Therefore, at least a third of the possible choices of $\bszeta \in [0, 1]^d$ will produce this bound.
    If we extend $\{ \gamma_\fraku \}_{\fraku \subseteq \Npos}$ to a system of weights for all finite subsets of $\Npos$, and if
    \[
       \sum_{\substack{\fraku \subset \Npos \\ |\fraku| < \infty}} \gamma_\fraku^{1/\alpha} \big( 2 \zeta(\alpha/\lambda) \big)^{|\fraku|}
       <
       \infty
    \]
    holds, then the bound can be stated with an absolute constant independent of the dimension.
\end{corollary}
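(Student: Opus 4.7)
The plan is to apply the sharper bound in Theorem~\ref{thm:upperbound} with the Korobov parameters $\Cinfty = 1$ and $r = 1$, taking $m = |A_{d,\alpha,\bsgamma}(M)|$ as suggested by the statement. First I would verify the prerequisite~\eqref{eq:kormbound}: the hyperbolic-cross bound~\eqref{eq:HCbound2} gives $m^{1+\epsilon} \le M^{(1+\epsilon)/\lambda}\, \muquant^{1+\epsilon}(\lambda)$, and substituting the prescribed value of $M$ shows that $24 C_\epsilon\, m^{1+\epsilon} \le n^{1-\epsilon}$ exactly. I would then verify the side condition $\bszero \in \{\bsh_i : i \le m\}$: the hypothesis~\eqref{eq:nMcond} combined with~\eqref{eq:HCbound2} forces $M > \gamma_\emptyset^{-1} = \ralpha(\bszero)$, so $\bszero$ lies in $A_{d,\alpha,\bsgamma}(M)$, which by its very definition collects the $m$ indices with largest $\sigma_\bsh$.

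Next I would control the three terms appearing in the bound of Theorem~\ref{thm:upperbound}. Since every $\bsh \notin A_{d,\alpha,\bsgamma}(M)$ satisfies $\ralpha(\bsh) > M$, one has $\sigma_{m+1} \le 1/M$. For the quadratic tail, the standard truncation
\[
    \sum_{\ralpha(\bsh) > M} \ralpha^{-s}(\bsh)
    \le
    M^{1/\lambda - s} \sum_{\bsh \in \Z^d} \ralpha^{-1/\lambda}(\bsh)
    =
    M^{1/\lambda - s}\, \muquant(\lambda)
    \qquad (s > 1/\lambda)
\]
yields $\sum_{i > m} \sigma_i^2 \le M^{1/\lambda - 2}\, \muquant(\lambda)$. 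For the quartic sum with the extra factor $i^\epsilon$, I would first use~\eqref{eq:HCbound2} in the form $i \le \ralpha(\bsh_i)^{1/\lambda}\,\muquant(\lambda)$ to obtain $i^\epsilon\, \sigma_i^4 \le \muquant^\epsilon(\lambda)\, \sigma_i^{4 - \epsilon/\lambda}$, and then reapply the truncation trick (valid because $4 - \epsilon/\lambda > 1/\lambda$ whenever $\lambda > (1+\epsilon)/4$, which is guaranteed by $\lambda > 1/2$ and $\epsilon \le 1$) to get $\sum_{i > m} i^\epsilon\, \sigma_i^4 \le \muquant^{1+\epsilon}(\lambda)\, M^{(1+\epsilon)/\lambda - 4}$.

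Substituting the defined $M = (n^{1-\epsilon}/(24 C_\epsilon))^{\lambda/(1+\epsilon)}\, \muquant^{-\lambda}(\lambda)$, a short bookkeeping computation shows the clean identity $\sqrt{(24 C_\epsilon / n^{1-\epsilon}) \sum_{i > m} i^\epsilon \sigma_i^4} \le M^{-2}$, while a trivial inequality on $n$ also yields $(2/n) \sum_{i > m} \sigma_i^2 \le M^{-2}$. The nested square-root in Theorem~\ref{thm:upperbound} is therefore at most $\sqrt{2}/M$, giving $\eworKor{d}(\lsqzeta) \le (1+\sqrt{2})/M$, and unpacking $M$ while using $(24 C_\epsilon)^{\lambda/(1+\epsilon)} \le (24 C_\epsilon)^\lambda$ produces the stated bound with the factor $3$. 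The dimension-free tractability claim follows immediately: the assumed convergence of $\sum_\fraku \gamma_\fraku^{1/\alpha}(2\zeta(\alpha/\lambda))^{|\fraku|}$ dominates $\muquant(\lambda)$ via the elementary inequality $\gamma_\fraku^{1/\lambda} \le \gamma_\fraku^{1/\alpha}$ (since $\gamma_\fraku \le 1$ and $\lambda < \alpha$). The main delicacy I anticipate is the precise bookkeeping that makes the $M^{-2}$ identity exact, ensuring that the exponents of $n^{1-\epsilon}/(24 C_\epsilon)$ and of $\muquant(\lambda)$ arising from the two truncations cancel cleanly so that the nested square-root collapses into a single multiple of $1/M$.
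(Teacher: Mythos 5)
Your proposal is correct and follows essentially the same route as the paper: choose $M$ so that \eqref{eq:HCbound2} forces \eqref{eq:kormbound}, deduce $\bszero \in \{\bsh_i \st i \le m\}$ from \eqref{eq:nMcond}, and bound all tails by the $1/\lambda$-moment $\muquant(\lambda)$ via the standard truncation $\sum_{\ralpha(\bsh)>M}\ralpha^{-s}(\bsh) \le M^{1/\lambda-s}\muquant(\lambda)$. The only (valid) deviation is in the quartic term, where you bound $i^{\epsilon} \le \muquant^{\epsilon}(\lambda)\,\ralpha^{\epsilon/\lambda}(\bsh_i)$ instead of the paper's $\|\bsh_i\|_{\infty}^{\epsilon} \le \ralpha^{\epsilon/\alpha}(\bsh_i)$, which makes the exponents cancel to exactly $M^{-4}$ and spares you the paper's $\epsilon/\alpha$-dependent exponent checks while still landing within the stated constant $3$.
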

\begin{proof}
    We slightly edit the bound from Theorem~\ref{thm:upperbound} on $\| f - \calP_m f \|_{L_2}^2$ as follows:
    \begin{align*}
        \| f - \calP_m f \|_{L_2}^2
	&=
	\sum_{\substack{\bsh \in \Z^d \\ \ralpha(\bsh) > M}} |\hat{f}_\bsh|^2
	\le
	\sum_{\substack{\bsh \in \Z^d \\ \ralpha(\bsh) > M}} \left(\frac{\ralpha(\bsh)}{M}\right)^2 |\hat{f}_\bsh|^2
	\le
	\frac{1}{M^2} \big\| f \big\|_{d, \alpha, \bsgamma}^2
	.
    \end{align*}
    Here, we use~\eqref{eq:HCbound2} to see that~\eqref{eq:kormbound} is satisfied if
    \begin{equation} \label{eq:Mdef}
	M = \left( \frac{n^{1-\epsilon}}{24 C_\epsilon} \right)^{\lambda/(1+\epsilon)} \muquant^{-\lambda}(\lambda)
	.
    \end{equation}
    From~\eqref{eq:nMcond}, we have that $(m \muquant^{-1}(\lambda))^{\lambda} > \gamma_\emptyset^{-1}$, and from~\eqref{eq:HCbound2} and $i \le |A_{d, \alpha, \bsgamma}(\ralpha(\bsh_i))|$ we have
    \[
	\ralpha(\bsh_m) \ge (m \muquant^{-1}(\lambda))^\lambda > \gamma_\emptyset^{-1} = \ralpha(\bszero)
	,
    \]
    and so ${\bszero \in \{ \bsh_i \st i \le m \}}$.
    The proof of Theorem~\ref{thm:upperbound} with $\Cinfty = 1$ and $r = 1$, implies that
    \begin{align*}
	\eworKor{d}(\lsqzeta)
	\le
	\frac{1}{M} + \sqrt{\frac{2}{n}\sum_{\substack{\bsh \in \Z^d \\ \ralpha(\bsh) > M}} \ralpha^{-2}(\bsh) + \sqrt{\frac{24 C_\epsilon}{n^{1-\epsilon}} \sum_{\substack{\bsh \in \Z^d \\ \ralpha(\bsh) > M}} \ralpha^{-4}(\bsh) \, \| \bsh_i \|_{\infty}^\epsilon}}
	,
    \end{align*}
    for a third of the possible choices for $\bszeta$.
    We now fix $\lambda \in (1/2, \alpha)$ arbitrarily.
    The subsequent bounds depend on this $\lambda$, and since they hold for all values, the final result holds for all $\lambda \in (1/2, \alpha)$.
    We proceed by using the fact that $\| \bsh \|_{\infty}^\alpha \le \ralpha(\bsh)$, leading to the bound
    \begin{align*}
	\eworKor{d}(\lsqzeta)
	&\le
	\frac{1}{M} + \sqrt{\frac{2}{n \, M^{2 - 1/\lambda}} \muquant(\lambda) + \sqrt{\frac{24 C_\epsilon}{n^{1-\epsilon}} \sum_{\substack{\bsh \in \Z^d \\ \ralpha(\bsh) > M}} \ralpha^{\epsilon/\alpha-4}(\bsh) }}
	\\
	&\le
	\frac{1}{M} + \sqrt{\frac{2}{n \, M^{2 - 1/\lambda}} \muquant(\lambda) + \sqrt{\frac{24 C_\epsilon}{n^{1-\epsilon} M^{4-\epsilon/\alpha-1/\lambda}} \muquant(\lambda) }}
	.
    \end{align*}
    Substituting the value of $M$ from~\eqref{eq:Mdef}, we obtain
    \begin{align*}
	&\eworKor{d}(\lsqzeta)
	\\*
	&\hspace{5mm}\le
	\left( \frac{n^{1-\epsilon}}{24 C_\epsilon} \right)^{-\lambda/(1+\epsilon)} \muquant^{\lambda}(\lambda)
	\\*
	&\hspace{10mm}+
	\sqrt{
	    2 (24 C_\epsilon)^{\frac{2\lambda - 1}{1+\epsilon}} n^{-1-\frac{1-\epsilon}{1+\epsilon}(2\lambda-1)} \muquant^{2 \lambda}(\lambda)
	    +
	    (24 C_\epsilon)^{\frac{2 \lambda - \frac{\epsilon \lambda}{2 \alpha} + \frac{\epsilon}{2}}{1 + \epsilon}} n^{-\frac{1-\epsilon}{1+\epsilon} (2 \lambda - \frac{\epsilon \lambda}{2 \alpha} + \frac{\epsilon}{2})} \muquant^{2\lambda - \frac{\epsilon \lambda}{2 \alpha}}(\lambda)
	}
	\\
	&\hspace{5mm}\le
	\left( (24 C_\epsilon)^{\frac{\lambda}{1+\epsilon}} n^{-\lambda \frac{1-\epsilon}{1+\epsilon}} + \sqrt{3} (24 C_\epsilon)^{\frac{\lambda - \frac{\epsilon \lambda}{4 \alpha} + \frac{\epsilon}{4}}{1+\epsilon}} n^{- \frac{1-\epsilon}{1+\epsilon} \lambda} \right) \muquant^\lambda(\lambda)
	\\
	&\hspace{5mm}\le
	3 (24 C_\epsilon)^{\frac{\lambda - \frac{\epsilon \lambda}{4 \alpha} + \frac{\epsilon}{4}}{1+\epsilon}} n^{-\lambda \frac{1-\epsilon}{1+\epsilon}} \muquant^\lambda(\lambda)
	\\
	&\hspace{5mm}\le
	3 (24 C_\epsilon)^{\lambda} n^{-\lambda \frac{1-\epsilon}{1+\epsilon}} \muquant^\lambda(\lambda)
	,
    \end{align*}
    where we have used the fact that $-\frac{\epsilon \lambda}{2 \alpha} + \frac{\epsilon}{2} > 0$ and that $\frac{1}{1+\epsilon} (\lambda - \frac{\epsilon \lambda}{4 \alpha} + \frac{\epsilon}{4}) \le \lambda$.
\end{proof}

We can also develop the bound on Theorem~\ref{thm:rationalbound}.

\begin{corollary} \label{cor:korobovrational}
    Let $d \in \Npos$, $\alpha > 1/2$, and $\bsgamma = \{ \gamma_\fraku \}_{\fraku \subseteq \{ 1, \ldots, d \}}$ be a set of weights with $\gamma_\fraku \in (0, 1]$.
    For the Korobov space, $\KorSpace$,
    if $n \in \Npos$ is large enough such that~\eqref{eq:nMcond} holds, i.e.,
    \begin{align*}
	\left| A_{d, \alpha, \bsgamma}\left( M \right) \right|^\lambda \muquant^{-\lambda}(\lambda) > \gamma_\emptyset^{-1}
	\quad
	\text{where}
	\quad
	M = \left( \frac{n^{1-\epsilon}}{24 C_\epsilon} \right)^{\lambda/(1+\epsilon)} \muquant^{-\lambda}(\lambda)
	,
    \end{align*}
    and $N \in \Npos$ is a prime, satisfying
    \begin{equation} \label{eq:Nbound}
        N
        \ge
        2 n^{2 + \frac{1}{2\alpha - \alpha/\lambda}}
        ,
    \end{equation}
    then, for $m = |A_{d, \alpha, \bsgamma}(M)|$, there exists a generating vector $\bsz \in \{ 1, \ldots, N \}^d$ for which we have the following bound
    \[
	\eworKor{d}(\lsqNz)
	\le
	4 \left( 24 C_\epsilon \, \muquant(\lambda) \, n^{-\frac{1-\epsilon}{1+\epsilon}} \right)^\lambda
	,
    \]
    with $0 < \epsilon \le 1$ and $1/2 < \lambda < \alpha$, and $\muquant$ defined by~\eqref{eq:mu_quantity}.
    Furthermore, at least a third of the possible choices of $\bsz$ will produce this bound.
    If we extend $\{ \gamma_\fraku \}_{\fraku \subseteq \Npos}$ to a system of weights for all finite subsets of $\Npos$, and if
    \[
       \sum_{\substack{\fraku \subset \Npos \\ |\fraku| < \infty}} \gamma_\fraku^{1/\alpha} \big( 2 \zeta(\alpha/\lambda) \big)^{|\fraku|}
       <
       \infty
    \]
    holds, then the bound can be stated with an absolute constant independent of the dimension.
\end{corollary}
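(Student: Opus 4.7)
The plan is to adapt the proof of Corollary~\ref{cor:korobovbound} to handle the two additional terms appearing in Theorem~\ref{thm:rationalbound} that have no analogue in Theorem~\ref{thm:upperbound}: the aliasing sum $\sum_{i>m,\,\bsh_i\equiv_N\bszero}\sigma_i^2$ and the paired-tail term $(1/n)\sum_{i,j>m,\,2n\|\bsh_j\|_\infty>N}\sigma_i^2\sigma_j^2$. I would keep the same choice $M$ from~\eqref{eq:Mdef} and $m=|A_{d,\alpha,\bsgamma}(M)|$ as in Corollary~\ref{cor:korobovbound}, so that condition~\eqref{eq:mbound} (in the Korobov specialisation~\eqref{eq:kormbound}) and $\bszero\in\{\bsh_i:i\le m\}$ are established exactly as before. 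The hypothesis $N\ge 2n^{2+1/(2\alpha-\alpha/\lambda)}$ together with the Korobov property $\|\bsh_i\|_\infty\le i\le m$ immediately yields~\eqref{eq:firstNbound}, since $m\lesssim n^{(1-\epsilon)/(1+\epsilon)}$ whereas the exponent of $n$ in the bound on $N$ strictly exceeds $2$.

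For the terms that also appear in Theorem~\ref{thm:upperbound}, namely $\sigma_{m+1}$, $(2/n)\sum_{i>m}\sigma_i^2$ and $(24C_\epsilon/n^{1-\epsilon})\sum_{i>m}i^\epsilon\sigma_i^4$, I would repeat the computation from Corollary~\ref{cor:korobovbound} verbatim: dominate each integrand by an appropriate power of $M$ times $\ralpha^{-1/\lambda}(\bsh)$ and apply~\eqref{eq:HCbound2}. This reproduces a main contribution of order $(24C_\epsilon\,\muquant(\lambda)\,n^{-(1-\epsilon)/(1+\epsilon)})^\lambda$. For the two extra terms I would exploit the size restrictions on the indices: $\bsh_i\equiv_N\bszero$ with $\bsh_i\ne\bszero$ forces $\|\bsh_i\|_\infty\ge N$ and hence $\ralpha(\bsh_i)\ge N^\alpha$, while $2n\|\bsh_j\|_\infty>N$ gives $\ralpha(\bsh_j)\ge(N/(2n))^\alpha$. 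The same $\ralpha^{-2}(\bsh)\le R^{1/\lambda-2}\,\ralpha^{-1/\lambda}(\bsh)$ trick with thresholds $R=N^\alpha$ and $R=(N/(2n))^\alpha$, combined with factorisation of the double sum, then yields
\[
    \sum_{\substack{i>m\\\bsh_i\equiv_N\bszero}}\sigma_i^2 \;\le\; N^{\alpha(1/\lambda-2)}\,\muquant(\lambda),
    \qquad
    \frac{1}{n}\sum_{\substack{i,j>m\\2n\|\bsh_j\|_\infty>N}}\sigma_i^2\,\sigma_j^2 \;\le\; \frac{\muquant^2(\lambda)}{n}\,M^{1/\lambda-2}\left(\frac{N}{2n}\right)^{\alpha(1/\lambda-2)}.
\]

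The main bookkeeping step, and the one I expect to be the principal technical obstacle, is verifying that with $N$ chosen as in~\eqref{eq:Nbound} these two contributions are dominated by the main-term bound already established. Substituting $M$ from~\eqref{eq:Mdef} and matching exponents of $n$ shows that the paired-tail contribution is controlled precisely when the exponent of $n$ in $N$ is at least $2+1/(2\alpha-\alpha/\lambda)$; the aliasing bound requires a strictly weaker lower bound on $N$ and hence is automatic. Incorporating the two extra contributions under the outer square root then inflates the constant $3$ of Corollary~\ref{cor:korobovbound} to at most $4$, yielding the stated bound, and since each of the two Chebyshev bounds in the proof of Theorem~\ref{thm:rationalbound} holds for at least $2/3$ of the choices of $\bsz$, it holds simultaneously for at least a third of them. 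The dimension-independent claim follows verbatim from Corollary~\ref{cor:korobovbound}, as the only $d$-dependent factor in the final estimate is the same $\muquant(\lambda)$, which remains bounded under the stated summability condition on $\{\gamma_\fraku\}$.
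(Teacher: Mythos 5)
Your proposal is correct and follows essentially the same route as the paper's proof: the same choice of $M$ and $m$ as in Corollary~\ref{cor:korobovbound}, the same verification of \eqref{eq:kormbound}, \eqref{eq:firstNbound} and $\bszero \in \{\bsh_i \st i \le m\}$, the same threshold trick $\ralpha^{-2}(\bsh) \le R^{1/\lambda-2}\ralpha^{-1/\lambda}(\bsh)$ combined with \eqref{eq:HCbound2} to control the aliasing and paired-tail sums, and the same observation that \eqref{eq:Nbound} is calibrated exactly so that the paired-tail term is dominated by the main tail term, leading to the constant $4$ and the one-third fraction via the union of the two Chebyshev events. The only cosmetic difference is that the paper bounds the aliasing sum by $2N^{-2\alpha}\sum_i \ralpha^{-2}(\bsh_i)$ rather than your (slightly weaker but equally sufficient) $N^{\alpha/\lambda-2\alpha}\muquant(\lambda)$.
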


\begin{proof}
    The first part of the proof is identical to the proof of Corollary~\ref{cor:korobovbound}.
    We have the following
    \[
	\left\| f - \lsqNz\!(f) \right\|_{L_2}
	\le
	\left( \frac{1}{M} + \left\| \Phinzm^+ \right\|_{2 \to 2} \left\| \PhinNzinf \Dlambdam \right\|_{2 \to 2} \right) \big\| f \big\|_{d, \alpha, \bsgamma}
	.
    \]

    Recall that selecting
    \[
	M = \left( \frac{n^{1-\epsilon}}{24 C_\epsilon} \right)^{\lambda/(1+\epsilon)} \muquant^{-\lambda}(\lambda)
	,
    \]
    implies that~\eqref{eq:kormbound} holds and so~\eqref{eq:firstNbound} holds using~\eqref{eq:Nbound}.
    Also, by~\eqref{eq:nMcond} and~\eqref{eq:HCbound2}, we have ${\bszero \in \{ \bsh_i \st i \le m \}}$.
    The proof of Theorem~\ref{thm:rationalbound} with $\Cinfty = 1$ and $r = 1$, implies that
    \begin{align*}
	&\left\| \Phinzm^+ \right\|_{2 \to 2}^2 \left\| \PhinNzinf \Dlambdam \right\|_{2 \to 2}^2
	\\*
	&\hspace{10mm}\le
	2 \sum_{\substack{i > m \\ \bsh_i \equiv_N \bszero}} \ralpha^{-2}(\bsh_i) + \frac{2}{n} \sum_{i > m} \ralpha^{-2}(\bsh)
	\\
	&\hspace{15mm}+
	\vast( \frac{24 C_\epsilon}{n^{1-\epsilon}} \sum_{\substack{i > m \\ \bsh_i \not\equiv_N \bszero}} \| \bsh_i \|_{\infty}^{\epsilon} \ralpha^{-4}(\bsh_i)  + \frac{24}{n} \sum_{\substack{i, j > m \\ \bsh_i \not\equiv_N \bszero \\ \bsh_j \not\equiv_N \bszero \\ \| \bsh_j \|_{\infty} > N/(2 n)}} \ralpha^{-2}(\bsh_i) \, \ralpha^{-2}(\bsh_j) \vast)^{1/2}
	,
    \end{align*}
    for a third of the possible choices of $\bsz$.

    As with the previous corollary, we can arbitrarily fix $\lambda \in (1/2, \alpha)$.
    A bound on the first two sums can be derived as follows
    \begin{align*}
	2 \sum_{\substack{i > m \\ \bsh_i \equiv_N \bszero}} \ralpha^{-2}(\bsh_i) + \frac{2}{n} \sum_{i > m} \ralpha^{-2}(\bsh_i)
	&\le
	\frac{2}{N^{2\alpha}} \sum_{i = 1}^\infty \ralpha^{-2}(\bsh_i) + \frac{2}{n M^{2-1/\lambda}} \mu(\lambda)
	\\
	&\le
	\frac{4}{n M^{2-1/\lambda}} \mu(\lambda)
	\\
	&\le
	4 (24 C_\epsilon)^{\frac{2\lambda-1}{1+\epsilon}} n^{\frac{1-\epsilon}{1+\epsilon} (1 - 2\lambda) - 1} \mu^{2\lambda}(\lambda)
	.
    \end{align*}
    In the second inequality, we use the fact that the bound on the first term is smaller than the second.

    We can continue with the terms inside the square root.
    The first can be bound as follows
    \begin{align*}
	&\frac{24 C_\epsilon}{n^{1-\epsilon}} \sum_{\substack{i > m \\ \bsh_i \not\equiv_N \bszero}} \| \bsh_i \|_{\infty}^{\epsilon} \, \ralpha^{-4}(\bsh_i)
	\\*
	&\hspace{10mm}\le
	\frac{24 C_\epsilon}{n^{1-\epsilon}} \sum_{\substack{i > m \\ \bsh_i \not\equiv_N \bszero}} \ralpha^{\epsilon/\alpha-4}(\bsh_i)
	\\
	&\hspace{10mm}\le
	\frac{24 C_\epsilon}{n^{1-\epsilon}} \frac{1}{M^{4-1/\lambda-\epsilon/\alpha}} \mu(\lambda)
	\\
	&\hspace{10mm}\le
	(24 C_\epsilon)^{\frac{1}{1+\epsilon}(4\lambda + \epsilon - \frac{\epsilon\lambda}{\alpha})} n^{\frac{1-\epsilon}{1+\epsilon}(\frac{\epsilon\lambda}{\alpha} - \epsilon - 4\lambda)}  \mu^{4\lambda - \frac{\epsilon\lambda}{\alpha}}(\lambda)
	\label{eq:firststar}
	\tag{$*$}
	.
    \end{align*}
    For the second term, since $\| \bsh \|_{\infty}^\alpha \le \ralpha(\bsh)$, defining $S := \left( \frac{N}{2n} \right)^\alpha$, we obtain
    \begin{align*}
	&\frac{24}{n} \sum_{\substack{i, j > m \\ \bsh_i \not\equiv_N \bszero \\ \bsh_j \not\equiv_N \bszero \\ \| \bsh_j \|_{\infty} > N/(2 n)}} \ralpha^{-2}(\bsh_i) \, \ralpha^{-2}(\bsh_j)
	\\*
	&\hspace{15mm}\le
	\frac{24}{n} \sum_{\substack{i, j > m \\ \ralpha(\bsh_j) > S \\ \bsh_i \not\equiv_N \bszero \\ \bsh_j \not\equiv_N \bszero}} \ralpha^{-2}(\bsh_i) \, \ralpha^{-2}(\bsh_j)
	\\
	&\hspace{15mm}\le
	\frac{24}{n} \left( \frac{1}{S M} \right)^{2-1/\lambda} \mu^2(\lambda)
	\\
	&\hspace{15mm}\le
	\frac{24}{n} \left( \frac{2n}{N} \right)^{2\alpha-\alpha/\lambda}  \frac{1}{M^{2-1/\lambda}}  \mu^2(\lambda)
	\\
	&\hspace{15mm}\le
	24 \times 2^{2\alpha-\alpha/\lambda} (24 C_\epsilon)^{\frac{2 \lambda - 1}{1+\epsilon}} n^{\frac{1-\epsilon}{1+\epsilon}(1 - 2\lambda) - 1 + 2\alpha - \alpha/\lambda} N^{\alpha/\lambda - 2\alpha} \mu^{2\lambda+1}(\lambda)
	.
	\label{eq:secondstar}
	\tag{$**$}
    \end{align*}
    The bound on $N$ is chosen so that the second bound~\eqref{eq:secondstar} is less than the first~\eqref{eq:firststar}, this means that
    \begin{align*}
	&\left\| \Phinzm^+ \right\|_{2 \to 2}^2 \left\| \PhinNzinf \Dlambdam \right\|_{2 \to 2}^2
	\\[1mm]
	&\hspace{5mm}\le
	4 (24 C_\epsilon)^{\frac{2\lambda-1}{1+\epsilon}} n^{\frac{1-\epsilon}{1+\epsilon} (1 - 2\lambda) - 1} \mu^{2\lambda}(\lambda)
	+
	\sqrt{2 (24 C_\epsilon)^{\frac{1}{1+\epsilon}(4\lambda + \epsilon - \frac{\epsilon\lambda}{\alpha})} \, n^{\frac{1-\epsilon}{1+\epsilon}(\frac{\epsilon\lambda}{\alpha} - \epsilon - 4\lambda)}  \mu^{4\lambda - \frac{\epsilon\lambda}{\alpha}}(\lambda)}
	\\[1mm]
	&\hspace{5mm}\le
	6 (24 C_\epsilon)^{\frac{1}{1+\epsilon}(2\lambda + \frac{\epsilon}{2} - \frac{\epsilon\lambda}{2\alpha})} \, n^{-2 \frac{1-\epsilon}{1+\epsilon} \lambda} \mu^{2\lambda}(\lambda)
	.
    \end{align*}
    This can then be applied to the worst-case error.
    For at least $1/3$ of the possibilities of $\bsz$, we have
    \begin{align*}
	\eworKor{d}(\lsqNz)
	&\le
	\frac{1}{M} + \left\| \Phinzm^+ \right\|_{2 \to 2} \left\| \PhinNzinf \Dlambdam \right\|_{2 \to 2}
	\\[1mm]
	&\le
	\left( \frac{n^{1-\epsilon}}{24 C_\epsilon} \right)^{-\lambda/(1+\epsilon)} \muquant^{\lambda}(\lambda)
	+
	\sqrt{6 \, (24 C_\epsilon)^{\frac{1}{1+\epsilon}(2\lambda + \frac{\epsilon}{2} - \frac{\epsilon\lambda}{2\alpha})} \, n^{-2 \frac{1-\epsilon}{1+\epsilon} \lambda} \mu^{2\lambda}(\lambda)}
	\\[1mm]
	&\le
	4 \, (24 C_\epsilon)^{\frac{1}{1+\epsilon}(\lambda + \frac{\epsilon}{4} - \frac{\epsilon\lambda}{4\alpha})} n^{-\frac{1-\epsilon}{1+\epsilon}\lambda} \mu^{\lambda}(\lambda)
	\\[1mm]
	&\le
	4 \, (24 C_\epsilon)^{\lambda} n^{-\frac{1-\epsilon}{1+\epsilon}\lambda} \mu^{\lambda}(\lambda)
	,
    \end{align*}
    where we have again used the fact that $-\frac{\epsilon \lambda}{2 \alpha} + \frac{\epsilon}{2} > 0$ and that $\frac{1}{1+\epsilon} (\lambda - \frac{\epsilon \lambda}{4 \alpha} + \frac{\epsilon}{4}) \le \lambda$.
\end{proof}

\bibliography{Bibliography}
\bibliographystyle{abbrv}

\end{document}